\newcommand{\Sum}{\displaystyle\sum}
\begin{document}

\title{Sets of values of equivalent almost periodic functions}


\author{J.M. Sepulcre \and
        T. Vidal
} \institute{J.M. Sepulcre  \and T. Vidal   \at
              Department of Mathematics\\
              University of Alicante\\
              03080-Alicante, Spain\\
             \email{JM.Sepulcre@ua.es, tmvg@alu.ua.es}
}

\date{Received: date / Accepted: date}
\maketitle
\begin{abstract}
In this paper we establish a new equivalence relation on the spaces
of almost periodic functions which allows us to prove a result
like Bohr's equivalence theorem extended to the case of all these functions.
\keywords{Almost periodic functions \and Exponential sums \and Bohr equivalence theorem \and Dirichlet series}
 \subclass{30D20 \and 30B50 \and 11K60 \and 30Axx}
\end{abstract}

\section{Introduction}




In the beginnings of the 20th century, the Danish mathematician H. Bohr gave important steps in the understanding
of Dirichlet series, which consist of those exponential sums that take the form
$$\Sum_{n\geq 1}a_ne^{-\lambda_n s},\ a_n\in\mathbb{C},\ s=\sigma+it,$$
 where
$\{\lambda_n\}$ is a strictly increasing sequence of posi\-tive numbers
tending to infinity. 
As a result of his investigations on these functions, he introduced an equivalence relation among them that led to so-called Bohr's equivalence theorem, which shows that equivalent Dirichlet series take the same values in certain vertical lines or strips in the complex plane (e.g. see \cite{Apostol,BohrDirichlet,Rigue,Spira}). 

On the other hand, Bohr also developed during the $1920$'s the theory of almost periodic functions, which opened a way to study a wide class of trigonometric series of the general type and even exponential series (see for example \cite{Besi,Bohr,Bohr2,Corduneanu1,Jessen}). The space of almost periodic functions in a vertical strip $U\subset \mathbb{C}$, which will be denoted in this paper as $AP(U,\mathbb{C})$, coincides with the set of the functions which can be approximated uniformly in every reduced strip 
of $U$ by exponential polynomials $a_1e^{\lambda_1s}+a_2e^{\lambda_2s}+\ldots+a_ne^{\lambda_ns}$ with complex coefficients $a_j$ and real exponents $\lambda_j$ (see for example \cite[Theorem 3.18]{Corduneanu1}).
These aproximating  finite exponential sums can be found by
Bochner-Fej\'{e}r's summation (see, in this regard, \cite[Chapter 1, Section 9]{Besi}). 

On the other hand, the exponential polynomials and the general Dirichlet series are a particular family of exponential sums or, in other words, expressions of the type
$$P_1(p)e^{\lambda_1p}+\ldots+P_j(p)e^{\lambda_jp}+\ldots,$$
where the $\lambda_j$'s are complex numbers and the $P_j(p)$'s are polynomials in the parameter $p$.
In this respect, we established in \cite[Definition 2]{SVBohr} (see also \cite[Definition 3]{SV}) a generalization of Bohr's equivalence relation on the classes $\mathcal{S}_{\Lambda}$ consisting of exponential sums of the form
 \begin{equation}\label{eqqnew}
\sum_{j\geq 1}a_je^{\lambda_jp},\ a_j\in\mathbb{C},\ \lambda_j\in\Lambda,
\end{equation}
where $\Lambda=\{\lambda_1,\lambda_2,\ldots,\lambda_j,\ldots\}$ is an arbitrary countable set of distinct real numbers (not necessarily unbounded), which are called a set of exponents or frequencies. 
Based on this equivalence relation, and under the assumption of existence of an integral basis for the set of exponents (whose condition is defined in Section \ref{sect2}),
we proved in
\cite[Theorem 1]{SVBohr} that two equivalent almost periodic functions, whose associated Dirichlet series have the same set of exponents, take the same values on any open vertical strip included in their strip of almost periodicity $U$, which constitutes a certain extension of Bohr's equivalence theorem for this case of functions.

In this way, the main purpose of this paper is to obtain a result like Bohr's equivalence theorem to all almost periodic functions in $AP(U,\mathbb{C})$, not only for those whose set of frequencies has an integral basis (which was already studied in \cite{SVBohr}) but also in the general case. To do this, we will consider a new equivalence relation $\shortstack{$_{{\fontsize{6}{7}\selectfont *}}$\\$\sim$}$ on the classes $\mathcal{S}_{\Lambda}$ (see Definition \ref{DefEquiv00}), from which we will show that every equivalence class in $AP(U,\mathbb{C})/\shortstack{$_{{\fontsize{6}{7}\selectfont *}}$\\$\sim$}$ 
is connected with a certain auxiliary function that originates the sets of values taken by this equivalence class
along a given vertical line included in the strip of almost periodicity (see Proposition \ref{pult} in this paper). This leads us to formulate and prove Theorem \ref{beqg}, which is the main result of this paper. 

\section{Definitions and preliminary results}\label{sect2}

We first recall the equivalence relation, based on that of \cite[p.173]{Apostol} for general Dirichlet series, which was defined in \cite{SV} in a more general context. 

\begin{definition}\label{DefEquiv}
Let $\Lambda$ be an arbitrary countable subset of distinct real numbers, $V$ the $\mathbb{Q}$-vector space generated by $\Lambda$ ($V\subset \mathbb{R}$), and $\mathcal{F}$
the $\mathbb{C}$-vector space of arbitrary functions $\Lambda\to\mathbb{C}$. 
We define
a relation $\sim$ on $\mathcal{F}$ by $a\sim b$ if there exists a $\mathbb{Q}$-linear map $\psi:V\to\mathbb{R}$ such that
$$b(\lambda)=a(\lambda)e^{i\psi(\lambda)},\ (\lambda\in\Lambda).$$
\end{definition}

From the equivalence relation above, we can consider a new relation on the classes $S_\Lambda$ of exponential sums of type \eqref{eqqnew}. From now on, we will denote as $\sharp A$ the cardinal of a set $A$.

\begin{definition}\label{DefEquiv00}
Given $\Lambda=\{\lambda_1,\lambda_2,\ldots,\lambda_j,\ldots\}$ a set of exponents, consider $A_1(p)$ and $A_2(p)$ two exponential sums in the class $\mathcal{S}_{\Lambda}$, say
$A_1(p)=\sum_{j\geq1}a_je^{\lambda_jp}$ and $A_2(p)=\sum_{j\geq1}b_je^{\lambda_jp}.$
We will say that $A_1$ is equivalent to $A_2$ if for each integer value $n\geq1$, with $n\leq\sharp\Lambda$, it is satisfied
$a_{n}^*\sim b_{n}^*$, where $a_{n}^*,b_{n}^*:\{\lambda_1,\lambda_2,\ldots,\lambda_{n}\}\to\mathbb{C}$ are the functions given by $a_{n}^*(\lambda_j):=a_j$ y $b_{n}^*(\lambda_j):=b_j$, $j=1,2,\ldots,n$ and $\sim$ is in Definition \ref{DefEquiv}.
\end{definition}

It is clear that the relation defined in the foregoing definition is an equivalence relation. 
We will use $\sim$ for the equivalence relation introduced in Definition \ref{DefEquiv} and $\shortstack{$_{{\fontsize{6}{7}\selectfont *}}$\\$\sim$}$ for that of Definition \ref{DefEquiv00}.

Let
$G_{\Lambda}=\{g_1, g_2,\ldots, g_k,\ldots\}$ be a basis of the
$\mathbb{Q}$-vector space generated by a set $\Lambda=\{\lambda_1,\lambda_2,\ldots\}$ of exponents 
(by abuse of notation, we will say that $G_{\Lambda}$ is a basis for $\Lambda$),
which implies that $G_{\Lambda}$ is linearly independent over the rational numbers and each $\lambda_j$ is expressible as a finite linear combination of terms of $G_{\Lambda}$, say
\begin{equation}\label{errej}
\lambda_j=\sum_{k=1}^{i_j}r_{j,k}g_k,\ \mbox{for some }r_{j,k}\in\mathbb{Q},\ i_j\in\mathbb{N}.
\end{equation}
We will say that $G_{\Lambda}$ is an \textit{integral basis} for $\Lambda$ when $r_{j,k}\in\mathbb{Z}$ for any $j,k$.
Moreover, we will say that $G_{\Lambda}$ is the \textit{natural basis} for $\Lambda$, and we will denote it as $G_{\Lambda}^*$, when it is constituted by elements in $\Lambda$. That is, firstly if $\lambda_1\neq 0$ then $g_1:=\lambda_1\in G_{\Lambda}^*$. Secondly, if $\{\lambda_1,\lambda_2\}$ are $\mathbb{Q}$-rationally independent, then $g_2:=\lambda_2\in G_{\Lambda}^*$. Otherwise, if $\{\lambda_1,\lambda_3\}$ are $\mathbb{Q}$-rationally independent, then $g_2:=\lambda_3\in G_{\Lambda}^*$, and so on. In this way, 
if $\lambda_j\in G_{\Lambda}^*$ then $r_{j,m_j}=1$ and $r_{j,k}=0$ for $k\neq m_j$, where $m_j$ is such that $g_{m_j}=\lambda_j$. In fact, each element in $G_{\Lambda}^*$ is of the form $g_{m_j}$ for $j$ such that $\lambda_j$ is $\mathbb{Q}$-linear independent of the previous elements in the basis. 
Furthermore, if $\lambda_j\notin G_{\Lambda}^*$ then $\lambda_j=\sum_{k=1}^{i_j}r_{j,k}g_k$, where $\{g_{1},g_{2},\ldots,g_{i_j}\}\subset \{\lambda_1,\lambda_2,\ldots,\lambda_{j-1}\}$.

In terms of a prefixed basis for the set of exponents $\Lambda$, we next quote a first characterization of the equivalence of two exponential sums in $\mathcal{S}_{\Lambda}$ \cite[Proposition 1 (mod.)]{SV}.

\begin{proposition}\label{prop1prima}
Given $\Lambda=\{\lambda_1,\lambda_2,\ldots,\lambda_j,\ldots\}$ a set of exponents, consider $A_1(p)$ and $A_2(p)$ two exponential sums in the class $\mathcal{S}_{\Lambda}$, say
$A_1(p)=\sum_{j\geq1}a_je^{\lambda_jp}$ and $A_2(p)=\sum_{j\geq1}b_je^{\lambda_jp}.$
Fixed a basis $G_{\Lambda}$ for $\Lambda$, for each $j\geq1$ let $\mathbf{r}_j\in \mathbb{R}^{\sharp G_{\Lambda}}$ be the vector of rational components verifying (\ref{errej}).  
Then $A_1\shortstack{$_{{\fontsize{6}{7}\selectfont *}}$\\$\sim$} A_2$ 
if and only if for each integer value $n\geq1$, with $n\leq\sharp\Lambda$, there exists a vector $\mathbf{x}_n=(x_{n,1},x_{n,2},\ldots,x_{n,k},\ldots)\in \mathbb{R}^{\sharp G_{\Lambda}}$
such that $b_j=a_j e^{<\mathbf{r}_j,\mathbf{x}_n>i}$ for $j=1,2,\ldots,n$.\vspace{0.1cm} 

\noindent Furthermore, if $G_{\Lambda}$ is an integral basis for $\Lambda$ then $A_1\shortstack{$_{{\fontsize{6}{7}\selectfont *}}$\\$\sim$} A_2$ 
if and only if there exists $\mathbf{x}_0=(x_{0,1},x_{0,2},\ldots,x_{0,k},\ldots)\in \mathbb{R}^{\sharp G_{\Lambda}}$
such that $b_j=a_j e^{<\mathbf{r}_j,\mathbf{x}_0>i}$ for every $j\geq 1$. 
\end{proposition}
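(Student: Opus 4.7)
The plan is to use the correspondence between $\mathbb{Q}$-linear maps $V\to\mathbb{R}$ and tuples in $\mathbb{R}^{\sharp G_\Lambda}$ given by $\psi\leftrightarrow(\psi(g_k))_k$. For the forward direction of the first equivalence, unwinding Definitions \ref{DefEquiv00} and \ref{DefEquiv} reveals that $A_1\shortstack{$_{{\fontsize{6}{7}\selectfont *}}$\\$\sim$}A_2$ precisely means that for every $n\le\sharp\Lambda$ there is a $\mathbb{Q}$-linear map $\psi_n:\mathrm{span}_\mathbb{Q}\{\lambda_1,\ldots,\lambda_n\}\to\mathbb{R}$ with $b_j=a_je^{i\psi_n(\lambda_j)}$ for $j\le n$. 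Extending $\psi_n$ $\mathbb{Q}$-linearly to the whole of $V$ by prescribing arbitrary values on the part of $G_\Lambda$ that does not already lie in $\mathrm{span}_\mathbb{Q}\{\lambda_1,\ldots,\lambda_n\}$, and setting $x_{n,k}:=\psi_n(g_k)$, the expansion (\ref{errej}) together with $\mathbb{Q}$-linearity gives $\psi_n(\lambda_j)=\langle\mathbf{r}_j,\mathbf{x}_n\rangle$, as required. The converse is exactly the same argument reversed: starting from $\mathbf{x}_n$, define $\psi_n(g_k):=x_{n,k}$, extend $\mathbb{Q}$-linearly to $V$, and restrict to $\mathrm{span}_\mathbb{Q}\{\lambda_1,\ldots,\lambda_n\}$ to witness $a_n^*\sim b_n^*$.

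For the \emph{Furthermore} part the implication $(\Leftarrow)$ is immediate from the first part by taking $\mathbf{x}_n:=\mathbf{x}_0$ for every $n$. For $(\Rightarrow)$, I would recast the problem in abelian-group language. Because $G_\Lambda$ is an integral basis, the free $\mathbb{Z}$-module $M:=\bigoplus_k\mathbb{Z} g_k$ contains the subgroup $\Lambda_\mathbb{Z}:=\langle\lambda_j:j\ge 1\rangle_\mathbb{Z}$, and a single $\mathbf{x}_0$ corresponds to a group homomorphism $\phi:M\to\mathbb{R}/2\pi\mathbb{Z}$ via $\phi(g_k):=x_{0,k}\bmod 2\pi$; the desired identity $b_j=a_je^{i\langle\mathbf{r}_j,\mathbf{x}_0\rangle}$ for every $j$ reads $e^{i\phi(\lambda_j)}=b_j/a_j$ (and $a_j=0$ forces $b_j=0$, which follows instantly from $A_1\shortstack{$_{{\fontsize{6}{7}\selectfont *}}$\\$\sim$}A_2$). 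Since $\mathbb{R}/2\pi\mathbb{Z}$ is divisible, hence injective in the category of abelian groups, any partial homomorphism defined on a subgroup of $M$ extends to all of $M$; the task therefore reduces to showing that $\lambda_j\mapsto b_j/a_j$ is a well-defined group homomorphism on $\Lambda_\mathbb{Z}$, i.e., that every integer relation $\sum_j c_j\lambda_j=0$ entails $\prod_j(b_j/a_j)^{c_j}=1$.

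The step I expect to be the main obstacle is exactly this compatibility check, but it follows cleanly from the first part of the proposition: given such a relation with finite support contained in $\{1,\ldots,n\}$, the first part supplies a witness $\psi_n$, whence
\[
\prod_j (b_j/a_j)^{c_j}=\prod_j e^{ic_j\psi_n(\lambda_j)}=e^{i\psi_n(\sum_j c_j\lambda_j)}=e^{i\psi_n(0)}=1.
\]
This closes the argument. The reason integrality of $G_\Lambda$ is genuinely needed is precisely that only when $\mathbf{r}_j\in\mathbb{Z}^{\sharp G_\Lambda}$ does the $\mathbb{Z}$-module structure on $M$ become available; for a merely rational basis one has only rational linear relations and the map $e^{i(\cdot)}$ does not descend cleanly, which is why in that general setting the vector $\mathbf{x}_n$ must be allowed to depend on $n$.
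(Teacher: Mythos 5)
The paper does not actually prove Proposition~\ref{prop1prima}; it is quoted from \cite[Proposition 1 (mod.)]{SV}, so there is no in-text argument to compare against. Judged on its own, your proof is essentially correct and complete. The first equivalence is, as you say, a direct unwinding of Definitions~\ref{DefEquiv} and \ref{DefEquiv00} together with the correspondence $\psi\leftrightarrow(\psi(g_k))_k$; the only slip is your extension recipe: the elements of $G_\Lambda$ not lying in $\mathrm{span}_{\mathbb{Q}}\{\lambda_1,\dots,\lambda_n\}$ need not be linearly independent modulo that span (e.g.\ $\lambda_1=g_1+g_2$ forces $\psi(g_1)+\psi(g_2)=\psi_n(\lambda_1)$, so both values cannot be prescribed arbitrarily). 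The correct statement is simply that $\psi_n$ extends $\mathbb{Q}$-linearly from the subspace to all of $V$ by completing a basis; this is standard and does not affect anything downstream. Your treatment of the \emph{Furthermore} part is the more interesting contribution: reducing the existence of a single $\mathbf{x}_0$ to extending a character $\lambda_j\mapsto b_j/a_j$ (defined on the subgroup of $M=\bigoplus_k\mathbb{Z}g_k$ generated by the $\lambda_j$ with $a_j\neq 0$) via divisibility, hence injectivity, of $\mathbb{R}/2\pi\mathbb{Z}$, with the compatibility of integer relations supplied by the already-proved finite version. This is cleaner and more conceptual than the hands-on manipulations the paper uses in its nearby argument for Proposition~\ref{propnaturalbasis} (explicit bookkeeping with the vectors $\mathbf{p}_j$ and least common multiples), and it isolates exactly why integrality of the basis is indispensable — a point that is particularly relevant given the authors' closing note that the paper had to be restricted to the integral-basis case.
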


Now, in terms of the natural basis for a set of exponents $\Lambda$, we next provide a second characterization of the equivalence of two exponential sums in $\mathcal{S}_{\Lambda}$.

\begin{proposition}\label{propnaturalbasis}
Given $\Lambda=\{\lambda_1,\lambda_2,\ldots,\lambda_j,\ldots\}$ a set of exponents, consider $A_1(p)$ and $A_2(p)$ two exponential sums in the class $\mathcal{S}_{\Lambda}$, say
$A_1(p)=\sum_{j\geq1}a_je^{\lambda_jp}$ and $A_2(p)=\sum_{j\geq1}b_je^{\lambda_jp}.$
Fixed the natural basis $G_{\Lambda}^*=\{g_1,g_2,\ldots,g_k,\ldots\}$ for $\Lambda$, for each $j\geq1$ let $\mathbf{r}_j\in \mathbb{R}^{\sharp G_{\Lambda}^*}$ be the vector of rational components verifying (\ref{errej}).  
Then $A_1\shortstack{$_{{\fontsize{6}{7}\selectfont *}}$\\$\sim$} A_2$ 
if and only if there exists $\mathbf{x}_0=(x_{0,1},x_{0,2},\ldots,x_{0,k},\ldots)\in[0,2\pi)^{\sharp G_{\Lambda}^*}$
such that for each $j=1,2,\ldots$ it is satisfied $b_j=a_j e^{<\mathbf{r}_j,\mathbf{x}_0+\mathbf{p}_j>i}$ for some $\mathbf{p}_j=(2\pi n_{j,1},2\pi n_{j,2},\ldots)\in \mathbb{R}^{\sharp G_{\Lambda}^*}$, with $n_{j,k}\in\mathbb{Z}$.
\end{proposition}
\begin{proof}
Suppose that $A_1\shortstack{$_{{\fontsize{6}{7}\selectfont *}}$\\$\sim$} A_2$. Consider $I=\{1,2,\ldots,k,\ldots: \lambda_k\in G_{\Lambda}^*\}$ and $I_n=\{1,2,\ldots,k,\ldots,n: \lambda_k\in G_{\Lambda}^*\}$.
Let $j\in I$, then $r_{j,m_j}=1$ and $r_{j,k}=0$ for $k\neq m_j$, where $m_j$ is such that $g_{m_j}=\lambda_j$.
Thus, by Proposition \ref{prop1prima}, let $\mathbf{x}_j=(x_{j,1},x_{j,2},\ldots)\in\mathbb{R}^{\sharp G_{\Lambda}^*}$ be a vector such that
\begin{equation}\label{hfwsc}
b_j=a_j e^{i<\mathbf{r}_j,\mathbf{x}_j>}=a_j e^{i\sum_{k=1}^{i_j}r_{j,k}x_{j,k}}=a_je^{ir_{j,m_j}x_{j,m_j}}=a_je^{ix_{j,m_j}}.
\end{equation}
Define $\mathbf{x}_0=(x_{0,1},x_{0,2},\ldots)\in\mathbb{R}^{\sharp G_{\Lambda}^*}=\mathbb{R}^{\sharp I}$ as $x_{0,m_j}:=x_{j,m_j}$ for $j\in I$. Thus, by taking $\mathbf{p}_j=(0,0,\ldots)$, the result trivially holds for those $j$'s such that $\lambda_j\in G_{\Lambda}^*$, i.e. for $j\in I$. Now, let $j$ be such that $\lambda_j\notin G_{\Lambda}^*$, i.e. $j\notin I$. By Proposition \ref{prop1prima}, let $\mathbf{x}_j=(x_{j,1},x_{j,2},\ldots)\in\mathbb{R}^{\sharp G_{\Lambda}^*}$ be a vector such that
$$b_p=a_p e^{i<\mathbf{r}_p,\mathbf{x}_j>}=a_p e^{i\sum_{k=1}^{i_j}r_{p,k}x_{j,k}},\ p=1,2,\ldots,j.$$
Note that if $p=1,2,\ldots,j$ is such that $\lambda_p\in G_{\Lambda}^*$, then
$$b_p=a_pe^{ir_{p,m_p}x_{j,m_p}}=a_pe^{ix_{j,m_p}},$$
which necessarily implies, by (\ref{hfwsc}), that $x_{j,m_p}=x_{p,m_p}+2\pi n_{j,p}$ for some $n_{j,p}\in\mathbb{Z}$. 
Hence
$$b_j=a_j e^{i<\mathbf{r}_j,\mathbf{x}_j>}=a_j e^{i\sum_{k=1}^{i_j}r_{j,k}x_{j,k}}=a_j e^{i\sum_{p\in I_{j-1}}r_{j,m_p}x_{j,m_p}}=$$
$$a_j e^{i\sum_{p\in I_{j-1}}r_{j,m_p}(x_{p,m_p}+2\pi n_{j,p})}=a_j e^{i<\mathbf{r}_j,\mathbf{x}_0+\mathbf{p}_j>},$$
where $\mathbf{p}_j=(2\pi n_{j,1},2\pi n_{j,2},\ldots,0,0,\ldots)$. Moreover, by changing conveniently the vectors $\mathbf{p}_j$, we can take $\mathbf{x}_0\in [0,2\pi)^{\sharp G_{\Lambda}^*}$ without loss of generality.

Conversely, suppose the existence of $\mathbf{x}_0=(x_{0,1},x_{0,2},\ldots,x_{0,k},\ldots)\in\mathbb{R}^{\sharp G_{\Lambda}^*}$ satisfying $b_j=a_j e^{<\mathbf{r}_j,\mathbf{x}_0+\mathbf{p}_j>i}$ for some $\mathbf{p}_j=(2\pi n_{j,1},2\pi n_{j,2},\ldots)\in \mathbb{R}^{\sharp G_{\Lambda}^*}$, with $n_{j,k}\in\mathbb{Z}$. Let $r_{j,k}=\frac{p_{j,k}}{q_{j,k}}$ with $p_{j,k}$ and $q_{j,k}$ coprime integer numbers, and define $q_{n,k}:=\operatorname{lcm}(q_{1,k},q_{2,k},\ldots,q_{n,k})$ for each $k=1,2,\ldots$.
Thus, for any integer number $n\geq 1$, take $\mathbf{x}_n=\mathbf{x}_0+\mathbf{m}_n$, where  $m_{n,k}=2\pi p_{1,k}p_{2,k}\cdots p_{n,k}q_{n,k}$, $k=1,2,\ldots$.
Therefore,
it is satisfied $b_j=a_j e^{<\mathbf{r}_j,\mathbf{x}_n>i}$ for each $j=1,2,\ldots,n$, which implies that $A_1\shortstack{$_{{\fontsize{6}{7}\selectfont *}}$\\$\sim$} A_2$.
\qed
\end{proof}


%

We next study the case where the chosen basis is not the natural one.
Fixed a set $\Lambda=\{\lambda_1,\lambda_2,\ldots\}$ of exponents, let $G_{\Lambda}^*$ be the natural basis for $\Lambda$ and $G_{\Lambda}$ be an arbitrary basis for $\Lambda$. For each $j\geq 1$ let $\mathbf{r}_j$ and $\mathbf{s}_j$ be the vectors of rational components so that  $\lambda_j=<\mathbf{r}_j,\mathbf{g}>$ and $\lambda_j=<\mathbf{s}_j,\mathbf{h}>$, with $\mathbf{g}$ and $\mathbf{h}$ the vectors associated with the basis $G_{\Lambda}^*$ and $G_{\Lambda}$, respectively. Finally, for each $k\geq 1$, let $\mathbf{t}_k$ be the vector so that $h_k=<\mathbf{t}_k,\mathbf{g}>$, i.e.
\begin{equation}\label{matrix}
T=\begin{pmatrix}
t_{1,1} & t_{1,2} & \cdots & t_{1,j} & \cdots \\
t_{2,1} & t_{2,2} & \cdots & t_{2,j} & \cdots\\
\vdots & \dots & \ddots & \vdots & \cdots\\
t_{k,1} & t_{k,2} & \cdots & t_{k,j} & \cdots\\
\vdots & \dots & \ddots & \vdots & \cdots\\
\end{pmatrix}\end{equation}
is the change of basis matrix.
Thus, for any $\mathbf{x}_0\in\mathbb{R}^{\sharp G_{\Lambda}}$, we have
$$<\mathbf{r}_j,\mathbf{x}_0>=<\mathbf{s}_j,\mathbf{x}_1>,$$
where $\mathbf{x}_1$ is defined as $x_{1,k}=<\mathbf{t}_k,\mathbf{x}_0>$ for each $k\geq 1$. Indeed,
$$<\mathbf{s}_j,\mathbf{x}_1>=\sum_{k}s_{j,k}x_{1,k}=\sum_k s_{j,k}<\mathbf{t}_k,\mathbf{x}_0>=$$$$\sum_k s_{j,k}\sum_m t_{k,m}x_{0,m}=\sum_mx_{0,m}\sum_k s_{j,k} t_{k,m}=\sum_m r_{j,m}x_{0,m}=<\mathbf{r}_j,\mathbf{x}_0>$$
because $r_{j,m}=\sum_k s_{j,k}t_{k,m}$.
Consequently, for any $j$ and
\begin{equation}\label{vector}
\mathbf{p}_j=(2\pi n_{j,1},2\pi n_{j,2},\ldots)\in \mathbb{R}^{\sharp G_{\Lambda}},\ \mbox{with }n_{j,k}\in\mathbb{Z},
\end{equation}
we have
$$<\mathbf{r}_j,\mathbf{x}_0+\mathbf{p}_j>=<\mathbf{r}_j,\mathbf{x}_0>+<\mathbf{r}_j,\mathbf{p}_j>=<\mathbf{s}_j,\mathbf{x}_1>+<\mathbf{s}_j,\mathbf{q}_j>=<\mathbf{s}_j,\mathbf{x}_1+\mathbf{q}_j>,$$
where $\mathbf{q}_j$ is defined as $q_{1,k}=<\mathbf{t}_k,\mathbf{p}_j>$ for each $k\geq 1$, i.e. $\mathbf{q}_j$ is obtained from $T\cdot \mathbf{p}_j^t$.
In this way, we have proved the following result.

\begin{corollary}\label{cor1ner}
Given $\Lambda=\{\lambda_1,\lambda_2,\ldots,\lambda_j,\ldots\}$ a set of exponents, consider $A_1(p)$ and $A_2(p)$ two exponential sums in the class $\mathcal{S}_{\Lambda}$, say
$A_1(p)=\sum_{j\geq1}a_je^{\lambda_jp}$ and $A_2(p)=\sum_{j\geq1}b_je^{\lambda_jp}.$
Fixed a basis $G_{\Lambda}=\{g_1,g_2,\ldots,g_k,\ldots\}$ for $\Lambda$, for each $j\geq1$ let $\mathbf{r}_j\in \mathbb{R}^{\sharp G_{\Lambda}}$ be the vector of rational components verifying (\ref{errej}).  
Then $A_1\shortstack{$_{{\fontsize{6}{7}\selectfont *}}$\\$\sim$} A_2$ 
if and only if there exists $\mathbf{x}_0=(x_{0,1},x_{0,2},\ldots,x_{0,k},\ldots)\in[0,2\pi)^{\sharp G_{\Lambda}}$
such that for each $j=1,2,\ldots$ it is satisfied $b_j=a_j e^{<\mathbf{r}_j,\mathbf{x}_0+\mathbf{q}_j>i}$ for some $\mathbf{q}_j\in \mathbb{R}^{\sharp G_{\Lambda}}$ which is of the form
$\mathbf{q}_j=\mathbf{p}_j\cdot T^t$, where $T^t$ is the transpose of the change of basis matrix (\ref{matrix}) and $\mathbf{p}_j$ is of the form (\ref{vector}).
\end{corollary}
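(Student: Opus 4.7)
The strategy is to transport the natural-basis characterization of Proposition \ref{propnaturalbasis} to an arbitrary basis $G_\Lambda$ via the change-of-coordinates identity that the paragraph just above the corollary has already established. Writing $\tilde{\mathbf{r}}_j$ for the $G_\Lambda^*$-coordinates of $\lambda_j$ (the vector that the preceding discussion calls $\mathbf{r}_j$) and $\mathbf{r}_j$ for its $G_\Lambda$-coordinates as in the corollary, that identity reads
$\langle\tilde{\mathbf{r}}_j,\mathbf{y}_0+\mathbf{p}_j\rangle=\langle\mathbf{r}_j,\mathbf{x}_0+\mathbf{q}_j\rangle$
whenever $x_{0,k}=\langle\mathbf{t}_k,\mathbf{y}_0\rangle$ and $\mathbf{q}_j=\mathbf{p}_j\cdot T^t$. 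So essentially the entire computation is already in place and the proof reduces to gluing this identity to Proposition \ref{propnaturalbasis} on both sides.

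For the forward direction I would invoke Proposition \ref{propnaturalbasis} to obtain $\mathbf{y}_0\in[0,2\pi)^{\sharp G_\Lambda^*}$ and integer $2\pi$-vectors $\mathbf{p}_j$ with $b_j=a_j e^{i\langle\tilde{\mathbf{r}}_j,\mathbf{y}_0+\mathbf{p}_j\rangle}$, then apply the identity to rewrite this as $b_j=a_j e^{i\langle\mathbf{r}_j,\mathbf{x}_0+\mathbf{q}_j\rangle}$, and finally reduce $\mathbf{x}_0$ componentwise modulo $2\pi$ so that $\mathbf{x}_0\in[0,2\pi)^{\sharp G_\Lambda}$, absorbing the integer $2\pi$-shift this produces into $\mathbf{q}_j$. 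For the converse, starting with $\mathbf{x}_0$ and $\mathbf{q}_j=\mathbf{p}_j\cdot T^t$ as in the statement, I would use the invertibility of the change-of-basis matrix $T$ over $\mathbb{Q}$ (both $G_\Lambda^*$ and $G_\Lambda$ are $\mathbb{Q}$-bases of the same vector space $V$) to pull $\mathbf{x}_0$ back to a vector $\mathbf{y}_0\in\mathbb{R}^{\sharp G_\Lambda^*}$ with $x_{0,k}=\langle\mathbf{t}_k,\mathbf{y}_0\rangle$, read the identity in the opposite direction to get $b_j=a_j e^{i\langle\tilde{\mathbf{r}}_j,\mathbf{y}_0+\mathbf{p}_j\rangle}$, and apply the converse half of Proposition \ref{propnaturalbasis}, which only asks for $\mathbf{y}_0\in\mathbb{R}^{\sharp G_\Lambda^*}$ and not for confinement to a fundamental domain.

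The one point demanding genuine care is the normalization in the forward direction: after reducing $\mathbf{x}_0$ modulo $2\pi$, one has to verify that the leftover integer $2\pi$-shift can itself be written as $\mathbf{p}_j'\cdot T^t$ for some integer $2\pi$-vector $\mathbf{p}_j'$, so that the shift that ends up inside $\mathbf{q}_j$ retains the form prescribed by the statement. Beyond this bookkeeping everything is a routine substitution, and the corollary follows as a direct corollary—as its name suggests—of Proposition \ref{propnaturalbasis} combined with the change-of-basis computation carried out just above.
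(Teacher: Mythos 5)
You follow the same route as the paper: Proposition \ref{propnaturalbasis} combined with the change-of-coordinates identity $<\mathbf{r}_j,\mathbf{x}_0+\mathbf{p}_j>=<\mathbf{s}_j,\mathbf{x}_1+\mathbf{q}_j>$ established in the paragraph preceding the corollary, and your converse direction is sound (the proof of the converse half of Proposition \ref{propnaturalbasis} indeed uses only $\mathbf{y}_0\in\mathbb{R}^{\sharp G_{\Lambda}^*}$). But the point you isolate as ``the one point demanding genuine care'' and then defer as bookkeeping is exactly where the argument breaks, and it cannot be repaired: after transporting $\mathbf{y}_0\in[0,2\pi)^{\sharp G_{\Lambda}^*}$ to $\mathbf{x}_1\in\mathbb{R}^{\sharp G_{\Lambda}}$, reducing $\mathbf{x}_1$ componentwise modulo $2\pi$ produces a shift $2\pi\mathbf{n}$ with $\mathbf{n}$ an integer vector, and absorbing it requires $2\pi\mathbf{n}\in S_T=\{\mathbf{p}\cdot T^t\}$; since $T$ is merely rational, $2\pi\mathbb{Z}^{\sharp G_{\Lambda}}\not\subset S_T$ in general, and $e^{i<\mathbf{r}_j,2\pi\mathbf{n}>}$ is a nontrivial root of unity because the $\mathbf{r}_j$ are only rational. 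Concretely, take $\Lambda=\{1\}$, so that $G_{\Lambda}^*=\{1\}$, and choose $G_{\Lambda}=\{2\}$; then $T=(2)$, $\mathbf{r}_1=(1/2)$ and $S_T=4\pi\mathbb{Z}$, so the right-hand condition of the corollary reads $b_1=a_1e^{i(x_0+4\pi m)/2}=a_1e^{ix_0/2}$ with $x_0\in[0,2\pi)$, which reaches only arguments in $[0,\pi)$; yet $b_1=-a_1$ defines an equivalent sum (take $\psi(1)=\pi$ in Definition \ref{DefEquiv}). Hence the ``only if'' half of the corollary fails as stated for this basis.

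To be fair, the paper's own proof (the computation ending with ``we have proved the following result'') has the identical defect: it only produces $\mathbf{x}_1\in\mathbb{R}^{\sharp G_{\Lambda}}$ and never justifies confining it to $[0,2\pi)^{\sharp G_{\Lambda}}$, so your write-up is a faithful rendering of the intended argument together with an honest flag of the one step neither you nor the paper can supply. The statement becomes correct, and your two-step argument closes with no further work, if one either lets $\mathbf{x}_0$ range over all of $\mathbb{R}^{\sharp G_{\Lambda}}$ or replaces $[0,2\pi)^{\sharp G_{\Lambda}}$ by the image of $[0,2\pi)^{\sharp G_{\Lambda}^*}$ under the change of coordinates. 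It is also correct as printed when $G_{\Lambda}$ is an integral basis, since then $<\mathbf{r}_j,2\pi\mathbf{n}>\in2\pi\mathbb{Z}$ and the normalization is harmless --- consistent with the authors' appended note restricting the paper's scope to that case.
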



We next construct a generating expression for all exponential polynomials in a class $\mathcal{G}\subset \mathcal{S}_{\Lambda}/\shortstack{$_{{\fontsize{6}{7}\selectfont *}}$\\$\sim$}$. Let $2\pi \mathbb{Z}^{m}=\{(c_1,c_2,\ldots,c_m)\in\mathbb{R}^{m}:c_k=2\pi n_k, \mbox{with }n_k\in\mathbb{Z},\ k=1,2,\ldots,m\}$.
From Proposition \ref{propnaturalbasis}, it is clear that the set of all exponential sums $A(p)$ in an equivalence class $\mathcal{G}$ in $\mathcal{S}_{\Lambda}/\shortstack{$_{{\fontsize{6}{7}\selectfont *}}$\\$\sim$}$ can be determined by a function $E_{\mathcal{G}}:[0,2\pi)^{\sharp G_{\Lambda}^*}\times \prod_{j\geq 1}2\pi\mathbb{Z}^{\sharp G_{\Lambda}^*}\rightarrow \mathcal{S}_{\Lambda}$ of the form
\begin{equation}\label{2.4.000}
E_{\mathcal{G}}(\mathbf{x},\mathbf{p}_1,\mathbf{p}_2,\ldots):=\sum_{j\geq1}a_je^{<\mathbf{r}_j,\mathbf{x}+\mathbf{p}_j>i}e^{\lambda_jp}\text{, }
\mathbf{x}\in%
[0,2\pi)^{\sharp G_{\Lambda}^*},\ \mathbf{p}_j\in 2\pi\mathbb{Z}^{\sharp G_{\Lambda}^*},
\end{equation}%
where $a_1,a_2,\ldots,a_j,\ldots$ are the coefficients of an exponential sum in $\mathcal{G}$ and the $\mathbf{r}_j$'s are the vectors of rational components associated with the natural basis $G_{\Lambda}^*$ for $\Lambda$. 
We recommend the reader compare the definition of the function $E_{\mathcal{G}}$ with that of \cite[Expression (3)]{SVBohr}.

In particular, in this paper we are going to use Definition \ref{DefEquiv00} for the case of exponential sums in $\mathcal{S}_{\Lambda}$ of a complex variable $s=\sigma+it$. 
Precisely,
when the formal series in $\mathcal{S}_{\Lambda}$ are handled as exponential sums of a complex variable on which we fix a summation procedure, from equivalence class generating expression (\ref{2.4.000}) we can consider an auxiliary function as follows (compare with \cite[Definition 3]{SVBohr}).

\begin{definition}\label{auxuliaryfunc0}
Given $\Lambda=\{\lambda_1,\lambda_2,\ldots,\lambda_j,\ldots\}$ a set of exponents, let $\mathcal{G}$ be an equivalence class in $\mathcal{S}_{\Lambda}/\shortstack{$_{{\fontsize{6}{7}\selectfont *}}$\\$\sim$}$ and $a_1,a_2,\ldots,a_j,\ldots$ be the coefficients of an exponential sum in $\mathcal{G}$.
For each $j\geq 1$ let $\mathbf{r}_j$ be the vector of rational components satis\-fying the equality $\lambda_j=<\mathbf{r}_j,\mathbf{g}>=\sum_{k=1}^{q_j}r_{j,k}g_k$, where
$\mathbf{g}:=(g_1,\ldots,g_k,\ldots)$ is the vector of the elements of the natural basis $G_{\Lambda}^*$ for $\Lambda$. Suppose that some elements in $\mathcal{G}$, handled as exponential sums of a complex variable $s=\sigma+it$, are summable on at least  a certain set $P$ included in the real axis by some prefixed summation method.
Then we define the auxiliary function  $F_{\mathcal{G}}: P
\times
[0,2\pi)
^{\sharp G_{\Lambda}^*}\times \prod_{j\geq 1}2\pi\mathbb{Z}^{\sharp G_{\Lambda}^*}\rightarrow
\mathbb{C}$
 associated with $\mathcal{G}$, relative to the basis $G_{\Lambda}^*$, as
\begin{equation}\label{2.4.000p}
F_{\mathcal{G}}(\sigma,\mathbf{x},\mathbf{p}_1,\mathbf{p}_2,\ldots):=\sum_{j\geq1}a_je^{<\mathbf{r}_j,\mathbf{x}+\mathbf{p}_j>i}e^{\lambda_j\sigma}\text{, }
\end{equation}%
where $\sigma\in P,\ \mathbf{x}\in%
[0,2\pi)^{\sharp G_{\Lambda}^*}$, $\mathbf{p}_k\in 2\pi\mathbb{Z}^{\sharp G_{\Lambda}^*},$
and the series in (\ref{2.4.000p}) is summed by the prefixed summation method, applied at $t=0$ to the exponential sum obtained from the generating expression (\ref{2.4.000}) with $p=\sigma+it$.
\end{definition}

In particular, see Definition \ref{auxuliaryfunc} which concerns the case of almost periodic functions with the Bochner-Fej\'{e}r summation method and the set $P$ above is formed by the real projection of the strip of almost periodicity of the corresponding exponential sums.
In this context, we will see in this paper the strong link
between the sets of values in the complex plane taken by a function in $AP(U,\mathbb{C})$, its Dirichlet series and its associated auxiliary function.



\begin{definition}\label{DF}
Let $\Lambda=\{\lambda_1,\lambda_2,\ldots,\lambda_j,\ldots\}$ be an arbitrary countable set of distinct real numbers. We will say that a function $f:U\subset\mathbb{C}\to\mathbb{C}$ 
is in the class $\mathcal{D}_{\Lambda}$ if it is an almost periodic function in $AP(U,\mathbb{C})$ 
whose associated Dirichlet series 
is of the form
 \begin{equation}\label{eqqo}
\sum_{j\geq 1}a_je^{\lambda_js},\ a_j\in\mathbb{C},\ \lambda_j\in\Lambda,
\end{equation}
where $U$ is a strip of the type $\{s\in\mathbb{C}: \alpha<\operatorname{Re}s<\beta\}$, with $-\infty\leq\alpha<\beta\leq\infty$.
\end{definition}

Each almost periodic function in $AP(U,\mathbb{C})$ 
is determined by its Dirichlet series, 
which is of type (\ref{eqqo}). 
In fact it is convenient to remark that, even in the case that the sequence of the partial sums of its Dirichlet series 
does not converge uniformly, there exists a sequence of finite exponential sums, the Bochner-Fej\'{e}r polynomials, of the type $P_k(s)=\sum_{j\geq 1}p_{j,k}a_je^{\lambda_js}$ 
where for each $k$ only a finite number of the factors $p_{j,k}$ differ from zero, which converges uniformly to $f$ in every reduced strip in $U$ 
and converges formally to the Dirichlet series  
\cite[Polynomial approximation theorem, pgs. 50,148]{Besi}.

Moreover, the equivalence relation of Definition \ref{DefEquiv00} can be immediately adapted to the case of the functions (or classes of functions) which are identifiable by their also called Dirichlet series, in particular to the classes $\mathcal{D}_{\Lambda}$. 
More specifically, see \cite[Section 4, Definition 5 (mod.)]{SV} referred to the Besicovitch space which contains the classes of functions which are associated with Fourier or Dirichlet series and for which the extension of our equivalence relation makes sense. 

\section{The auxiliary functions associated with the classes $\mathcal{D}_{\Lambda}$}\label{sv}

Based on Definition \ref{auxuliaryfunc0}, applied to our particular case of almost periodic functions with the Bochner-Fej\'{e}r summation method, note that to every almost periodic function $f\in \mathcal{D}_{\Lambda}$, with $\Lambda$ an arbitrary set of exponents, we can associate an auxiliary function $F_f$ of countably many real variables as follows.

\begin{definition}\label{auxuliaryfunc}
Given $\Lambda=\{\lambda_1,\lambda_2,\ldots,\lambda_j,\ldots\}$ a set of exponents, let $f(s)\in\mathcal{D}_{\Lambda}$ be an almost periodic function in $\{s\in\mathbb{C}:\alpha<\operatorname{Re}s<\beta\}$, $-\infty\leq\alpha<\beta\leq\infty$, whose Dirichlet series is given by $\sum_{j\geq 1}a_je^{\lambda_js}$.
For each $j\geq 1$ let $\mathbf{r}_j$ be the vector of rational components satisfying the equality $\lambda_j=<\mathbf{r}_j,\mathbf{g}>=\sum_{k=1}^{q_j}r_{j,k}g_k$, where
$\mathbf{g}:=(g_1,\ldots,g_k,\ldots)$ is the vector of the elements of the natural basis $G_{\Lambda}^*$ for $\Lambda$.
Then we define the auxiliary function  $F_f: (\alpha,\beta)
\times
[0,2\pi)
^{\sharp G_{\Lambda}^*}\times \prod_{j\geq 1}2\pi\mathbb{Z}^{\sharp G_{\Lambda}^*}\rightarrow
\mathbb{C}$
 associated with $f$, relative to the basis $G_{\Lambda}^*$, as
\begin{equation}\label{2.4}
F_{f}(\sigma,\mathbf{x},\mathbf{p}_1,\mathbf{p}_2,\ldots):=\sum_{j\geq1}a_j e^{\lambda_j\sigma
}e^{<\mathbf{r}_j,\mathbf{x}+\mathbf{p}_j>i}\text{, }
\end{equation}%
where $\sigma \in
(\alpha,\beta)
\text{, }\mathbf{x}\in%
[0,2\pi)^{\sharp G_{\Lambda}^*},\ \mathbf{p}_j\in2\pi\mathbb{Z}^{\sharp G_{\Lambda}^*}$ and series (\ref{2.4}) is summed by Bochner-Fej\'{e}r procedure, applied at $t=0$ to the sum
$\sum_{j\geq1}a_j e^{<\mathbf{r}_j,\mathbf{x}+\mathbf{p}_j>i}e^{\lambda_js}$.
\end{definition}

If $f\in AP(U,\mathbb{C})$, it was proved in \cite[Lemma 3]{SV} (see also the Arxiv version) 
that each function of its equivalence class is also included in $AP(U,\mathbb{C})$. Then we first note that, if $\sum_{j\geq1}a_j e^{\lambda_js}$ is the Dirichlet series of $f\in AP(U,\mathbb{C})$, for every choice of $\mathbf{x}\in[0,2\pi)^{\sharp G_{\Lambda}}$ and $\mathbf{p}_j\in 2\pi\mathbb{Z}^{\sharp G_{\Lambda}}$, $j=1,2,\ldots$, the sum $\sum_{j\geq1}a_j e^{<\mathbf{r}_j,\mathbf{x}+\mathbf{p}_j>i}e^{\lambda_js}$ represents the Dirichlet series of an almost periodic function. 

We second note that if the Dirichlet series of $f(s)\in AP(U,\mathbb{C})$ converges uniformly on $U=\{s\in\mathbb{C}:\alpha<\operatorname{Re}s<\beta\}$, then $f(s)$ coincides with its Dirichlet series and (\ref{2.4}) can be viewed as summation by partial sums or ordinary summation.



In addition, we third note that the Dirichlet series $\sum_{j\geq 1}a_je^{\lambda_js}$, associated with a certain function $f\in\mathcal{D}_{\Lambda}$  arises from its auxiliary function $F_f$ by a special choice of its variables, that is $F_f(\sigma,t\mathbf{g},\mathbf{0},\mathbf{0},\ldots)=\sum_{j\geq 1}a_je^{\lambda_j(\sigma+it)}$. In fact, as we will see in this section, there is a strong link between the sets of values in the complex plane taken by both functions. 

In this respect, under the assumption that the natural basis for the set of the exponents is also an integral basis, it is clear that the vectors $\mathbf{p}_j$ do not play any role and hence the auxiliary function $F_f$,
 associated with $f$, can be taken as $F_{f}(\sigma,\mathbf{x}):=\sum_{j\geq1}a_j e^{\lambda_j\sigma
}e^{<\mathbf{r}_j,\mathbf{x}>i}$, $\sigma \in
(\alpha,\beta)
\text{, }\mathbf{x}\in%
[0,2\pi)^{\sharp G_{\Lambda}^*}$.

In general, if we consider an arbitrary basis for the set of exponents, Definition \ref{auxuliaryfunc} can be adapted by taking into account Corollary \ref{cor1ner}. For this purpose, given a basis $G_{\Lambda}$ for $\Lambda$, let $T$ be the change of basis matrix (\ref{matrix}), with respect to the natural basis, and
let $$S_T=\{\mathbf{q}\in \mathbb{R}^{\sharp G_{\Lambda}}: \mathbf{q}=\mathbf{p}\cdot T^t,\ \mbox{with }\mathbf{p}\mbox{ of the form }(\ref{vector})\}.$$

\begin{definition}\label{auxuliaryfunc2}
Given $\Lambda=\{\lambda_1,\lambda_2,\ldots,\lambda_j,\ldots\}$ a set of exponents, let $f(s)\in\mathcal{D}_{\Lambda}$ be an almost periodic function in $\{s\in\mathbb{C}:\alpha<\operatorname{Re}s<\beta\}$, $-\infty\leq\alpha<\beta\leq\infty$, whose Dirichlet series is given by $\sum_{j\geq 1}a_je^{\lambda_js}$.
For each $j\geq 1$ let $\mathbf{s}_j$ be the vector of rational components satisfying the equality $\lambda_j=<\mathbf{s}_j,\mathbf{g}>=\sum_{k=1}^{q_j}s_{j,k}g_k$, where
$\mathbf{g}:=(g_1,\ldots,g_k,\ldots)$ is the vector of the elements of an arbitrary basis $G_{\Lambda}$ for $\Lambda$.
Then we define the auxiliary function  $F_f^{G_{\Lambda}}: (\alpha,\beta)
\times
[0,2\pi)
^{\sharp G_{\Lambda}}\times \prod_{j\geq 1}S_T\rightarrow
\mathbb{C}$
 associated with $f$, relative to the basis $G_{\Lambda}$, as
\begin{equation}\label{2.4.11}
F_f^{G_{\Lambda}}(\sigma,\mathbf{x},\mathbf{q}_1,\mathbf{q}_2,\ldots):=\sum_{j\geq1}a_j e^{\lambda_j\sigma
}e^{<\mathbf{s}_j,\mathbf{x}+\mathbf{q}_j>i}\text{, }
\end{equation}%
where $\sigma \in
(\alpha,\beta)
\text{, }\mathbf{x}\in%
[0,2\pi)^{\sharp G_{\Lambda}}$, $\mathbf{q}_j\in S_T$, and
 series (\ref{2.4.11}) is summed by Bochner-Fej\'{e}r procedure, applied at $t=0$ to the sum
$\sum_{j\geq1}a_j e^{<\mathbf{r}_j,\mathbf{x}>i}e^{\lambda_js}$.
\end{definition}

If we take the natural basis, it is obvious that $F_f=F_f^{G_{\Lambda}^*}$.


We next show a characterization of the property of equivalence of functions in the classes $\mathcal{D}_{\Lambda}$ in terms of the auxiliary function relative to the natural basis.


\begin{proposition}\label{lequiv20}
Given $\Lambda=\{\lambda_1,\lambda_2,\ldots,\lambda_j,\ldots\}$ a set of exponents, let $f_1$ and $f_2$ be two almost periodic functions in the class $\mathcal{D}_{\Lambda}$ whose Dirichlet series are given by $\sum_{j\geq 1}a_je^{\lambda_js}$ and $\sum_{j\geq 1}b_je^{\lambda_js}$ respectively. Let $\mathbf{g}:=(g_1,g_2,\ldots,g_k,\ldots)$ be the vector of the elements of the natural basis $G_{\Lambda}^*$ for $\Lambda$. Thus $f_1$ is equivalent to $f_2$ if and only if there exist some $\mathbf{y}\in \mathbb{R}^{\sharp G_\Lambda^*}$ and $\mathbf{p}_j\in 2\pi\mathbb{Z}^{\sharp G_{\Lambda}^*}$, $j=1,2,\ldots$, such that
$$\sum_{j\geq 1}b_je^{\lambda_j(\sigma+it)}=F_{f_1}(\sigma,\mathbf{y}+t\mathbf{g},\mathbf{p}_1,\mathbf{p}_2,\ldots)$$ for $\sigma+it\in U$, where $U$ is an open vertical strip so that $f_2\in AP(U,\mathbb{C})$.
\end{proposition}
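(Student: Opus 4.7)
The plan is to reduce the statement directly to the coefficient characterisation given in Proposition~\ref{propnaturalbasis}, by unfolding Definition~\ref{auxuliaryfunc} and exploiting the identity $<\mathbf{r}_j,\mathbf{g}>=\lambda_j$ built into \eqref{errej}.

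First I would compute the right-hand side of the claimed identity. Substituting $\mathbf{x}=\mathbf{y}+t\mathbf{g}$ into \eqref{2.4} gives
$$F_{f_1}(\sigma,\mathbf{y}+t\mathbf{g},\mathbf{p}_1,\mathbf{p}_2,\ldots)=\sum_{j\geq 1}a_j e^{\lambda_j\sigma}e^{<\mathbf{r}_j,\mathbf{y}+t\mathbf{g}+\mathbf{p}_j>i}.$$
Since $<\mathbf{r}_j,t\mathbf{g}>=t\sum_k r_{j,k}g_k=t\lambda_j$, the factor $e^{i\lambda_j t}$ combines with $e^{\lambda_j\sigma}$ and the above equals
$$\sum_{j\geq 1}a_j e^{<\mathbf{r}_j,\mathbf{y}+\mathbf{p}_j>i}\,e^{\lambda_j(\sigma+it)}.$$
Thus the claimed identity says exactly that this last series, summed by Bochner-Fej\'{e}r, coincides on $U$ with the Dirichlet series of $f_2$.

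For the forward direction, Proposition~\ref{propnaturalbasis} applied to $f_1\shortstack{$_{{\fontsize{6}{7}\selectfont *}}$\\$\sim$}f_2$ yields $\mathbf{x}_0\in[0,2\pi)^{\sharp G_\Lambda^*}$ and $\mathbf{p}_j\in 2\pi\mathbb{Z}^{\sharp G_\Lambda^*}$ with $b_j=a_j e^{<\mathbf{r}_j,\mathbf{x}_0+\mathbf{p}_j>i}$; taking $\mathbf{y}:=\mathbf{x}_0$ and running the previous computation backwards gives the desired formula, since both sides are then the Bochner-Fej\'{e}r sum of a common Dirichlet series (the first remark following Definition~\ref{auxuliaryfunc}, i.e.\ \cite[Lemma 3]{SV}, ensures the twisted series is actually the Dirichlet series of an AP function on $U$). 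For the converse, the assumed equality of Dirichlet series of almost periodic functions, together with uniqueness of Dirichlet coefficients of an AP function (recalled after Definition~\ref{DF}), forces $b_j=a_j e^{<\mathbf{r}_j,\mathbf{y}+\mathbf{p}_j>i}$ for every $j\geq 1$. To plug this into Proposition~\ref{propnaturalbasis} I would decompose coordinate-wise $\mathbf{y}=\mathbf{x}_0+\mathbf{m}$ with $\mathbf{x}_0\in[0,2\pi)^{\sharp G_\Lambda^*}$ and $\mathbf{m}\in 2\pi\mathbb{Z}^{\sharp G_\Lambda^*}$, and absorb $\mathbf{m}$ into each $\mathbf{p}_j$ by setting $\mathbf{p}'_j:=\mathbf{p}_j+\mathbf{m}\in 2\pi\mathbb{Z}^{\sharp G_\Lambda^*}$; the rewritten relation $b_j=a_j e^{<\mathbf{r}_j,\mathbf{x}_0+\mathbf{p}'_j>i}$ matches the hypothesis of Proposition~\ref{propnaturalbasis} verbatim and yields $f_1\shortstack{$_{{\fontsize{6}{7}\selectfont *}}$\\$\sim$}f_2$.

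The step I expect to require most care is the coefficient identification in the converse: one needs that the Bochner-Fej\'{e}r sum of the twisted series is indeed an element of $AP(U,\mathbb{C})$ whose Dirichlet development is literally that series, so that coefficient uniqueness for Dirichlet expansions of AP functions can be invoked. This is exactly what the first remark after Definition~\ref{auxuliaryfunc} (an invocation of \cite[Lemma 3]{SV}) provides, so once it is cited the remaining verifications are routine algebraic manipulations in countably many real variables.
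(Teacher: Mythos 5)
Your proposal is correct and follows essentially the same route as the paper's own proof: unfold $F_{f_1}$ at $\mathbf{x}=\mathbf{y}+t\mathbf{g}$ using $<\mathbf{r}_j,t\mathbf{g}>=\lambda_j t$, and reduce both directions to the coefficient characterisation of Proposition~\ref{propnaturalbasis} via uniqueness of Dirichlet coefficients. Your explicit decomposition $\mathbf{y}=\mathbf{x}_0+\mathbf{m}$ in the converse is a slightly more careful rendering of a step the paper leaves implicit, but it is not a different argument.
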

\begin{proof}
Let $\sum_{j\geq 1}a_je^{\lambda_js}$ and $\sum_{j\geq 1}b_je^{\lambda_js}$ be the Dirichlet series associated with $f_1$ and $f_2$ respectively. Let $U$ be an open vertical strip so that $f_2\in AP(U,\mathbb{C})$.
If $f_1\shortstack{$_{{\fontsize{6}{7}\selectfont *}}$\\$\sim$} f_2$, then Proposition \ref{propnaturalbasis} assures the existence of
$\mathbf{x}_0\in [0,2\pi)^{\sharp G_\Lambda^*}$ such that for each $j=1,2,\ldots$
it is satisfied $b_j=a_j e^{<\mathbf{r}_j,\mathbf{x}_0+\mathbf{p}_j'>i}$ for some $\mathbf{p}_j'\in 2\pi\mathbb{Z}^{\sharp G_\Lambda^*}$. Thus, fixed $s=\sigma+it\in U$, we have
$$\displaystyle{\sum_{j\geq 1}b_je^{\lambda_j(\sigma+it)}=\sum_{j\geq1}a_je^{i<\mathbf{r}_j,\mathbf{x}_0+\mathbf{p}_j'>}e^{\lambda_j\sigma}e^{i\lambda_j t}}= \sum_{j\geq1}a_je^{\lambda_j\sigma}e^{i<\mathbf{r}_j,\mathbf{x}_0+\mathbf{p}_j'>}e^{it<\mathbf{r}_j,\mathbf{g}>}=$$
$$\sum_{j\geq1}a_je^{\lambda_j\sigma}e^{i<\mathbf{r}_j,\mathbf{x}_0+\mathbf{p}_j'+t\mathbf{g}>}=F_{f_1}(\sigma,\mathbf{y}_0+t\mathbf{g},\mathbf{p}_1,\mathbf{p}_2,\ldots),$$
where $\mathbf{y}_0\in \mathbb{R}^{\sharp G_\Lambda^*}$ and $\mathbf{p}_j\in 2\pi\mathbb{Z}^{\sharp G_{\Lambda}^*}$ are chosen so that $\mathbf{x}_0+t\mathbf{g}+\mathbf{p}_j'=\mathbf{y}_0+t\mathbf{g}+\mathbf{p}_j$, with $\mathbf{y}_0+t\mathbf{g}\in [0,2\pi)^{\sharp G_\Lambda^*}$.

Conversely, suppose the existence of $\mathbf{y}_0\in \mathbb{R}^{\sharp\Lambda}$ and $\mathbf{p}_j\in 2\pi\mathbb{Z}^{\sharp G_{\Lambda}^*}$, $j=1,2,\ldots$,
such that
$\sum_{j\geq 1}b_je^{\lambda_j(\sigma+it)}=F_{f_1}(\sigma,\mathbf{y}_0+t\mathbf{g},\mathbf{p}_1,\mathbf{p}_2,\ldots)$ for any $\sigma+it\in U$. Hence $$\sum_{j\geq 1}b_je^{\lambda_j(\sigma+it)}=\sum_{j\geq1}a_je^{i<\mathbf{r}_j,\mathbf{y}_0+\mathbf{p}_j>}e^{\lambda_j (\sigma+it)}\ \forall \sigma+it\in U.$$
Now, by the uniqueness of the coefficients of an exponential sum in $\mathcal{D}_{\Lambda}$, it is clear that
$b_j=a_j e^{<\mathbf{r}_j,\mathbf{y}_0+\mathbf{p}_j>i}$ for each $j\geq 1$, which shows that $f_1\shortstack{$_{{\fontsize{6}{7}\selectfont *}}$\\$\sim$} f_2$.
\qed
\end{proof}

We next define the following set which will be widely used from now on.

\begin{definition}\label{image}
Given $\Lambda=\{\lambda_1,\lambda_2,\ldots,\lambda_j,\ldots\}$ a set of exponents, let $f(s)\in \mathcal{D}_{\Lambda}$ be an almost periodic function in an open vertical strip $U$, and $\sigma_0=\operatorname{Re}s_0$ with $s_0\in U$. We define $\operatorname{Img}\left(F_f^{G_{\Lambda}}(\sigma_0,\mathbf{x},\mathbf{q}_1,\mathbf{q}_2,\ldots)\right)$ to be the set of values in the complex plane taken on by the auxiliary function $F_f^{G_{\Lambda}}(\sigma,\mathbf{x},\mathbf{q}_1,\mathbf{q}_2,\ldots)$, relative to a prefixed basis $G_{\Lambda}$, when $\sigma=\sigma_0$; that is
$\operatorname{Img}\left(F_f^{G_{\Lambda}}(\sigma_0,\mathbf{x},\mathbf{q}_1,\mathbf{q}_2,\ldots)\right)=\{s\in\mathbb{C}:\exists \mathbf{x}\in[0,2\pi)^{\sharp G_{\Lambda}^*}\ \mbox{and }\mathbf{q}_j\in S_T\mbox{ such that }s=F_f^{G_{\Lambda}}(\sigma_0,\mathbf{x},\mathbf{q}_1,\mathbf{q}_2,\ldots)\}.$
\end{definition}


We next prove that the sets of values taken on by the auxiliary function $F_f^{G_{\Lambda}}(\sigma,\mathbf{x},\mathbf{q}_1,\mathbf{q}_2,\ldots)$ are independent of the basis $G_{\Lambda}$. The proof is similar to that of Corollary \ref{cor1ner}.

\begin{lemma}\label{indep}
Given $\Lambda$ a set of exponents and $G_{\Lambda}$ an arbitrary basis for $\Lambda$, let $f(s)\in \mathcal{D}_{\Lambda}$ be an almost periodic function in an open vertical strip $U$, and $\sigma_0=\operatorname{Re}s_0$ with $s_0\in U$. Then  $$\operatorname{Img}\left(F_f^{G_{\Lambda}}(\sigma_0,\mathbf{x},\mathbf{q}_1,\mathbf{q}_2,\ldots)\right)=\operatorname{Img}\left(F_f^{G_{\Lambda}^*}(\sigma_0,\mathbf{x},\mathbf{p}_1,\mathbf{p}_2,\ldots)\right).$$ 
\end{lemma}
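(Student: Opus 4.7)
The plan is to show that both images coincide with the basis-independent set $\{g(\sigma_0) : g\in \mathcal{D}_\Lambda,\ g\shortstack{$_{{\fontsize{6}{7}\selectfont *}}$\\$\sim$} f\}$ of values at $\sigma_0$ of the almost periodic functions $\shortstack{$_{{\fontsize{6}{7}\selectfont *}}$\\$\sim$}$-equivalent to $f$. Once this is done, the equality $\operatorname{Img}(F_f^{G_\Lambda}(\sigma_0,\cdot))=\operatorname{Img}(F_f^{G_\Lambda^*}(\sigma_0,\cdot))$ is immediate, since this description makes no reference to the chosen basis.

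First I would unpack the definition: $F_f^{G_\Lambda^*}(\sigma_0,\mathbf{x},\mathbf{p}_1,\mathbf{p}_2,\ldots)$ is, by construction, the Bochner-Fej\'er summation at $s=\sigma_0$ of the Dirichlet series $\sum_{j\geq 1}\bigl(a_je^{i\langle \mathbf{r}_j,\mathbf{x}+\mathbf{p}_j\rangle}\bigr)e^{\lambda_j s}$. By Proposition \ref{propnaturalbasis}, as $(\mathbf{x},\mathbf{p}_1,\mathbf{p}_2,\ldots)$ ranges over $[0,2\pi)^{\sharp G_\Lambda^*}\times\prod_{j\geq 1}2\pi\mathbb{Z}^{\sharp G_\Lambda^*}$, the coefficient sequence $\bigl(a_je^{i\langle \mathbf{r}_j,\mathbf{x}+\mathbf{p}_j\rangle}\bigr)_j$ traces out precisely the collection of sequences $\shortstack{$_{{\fontsize{6}{7}\selectfont *}}$\\$\sim$}$-equivalent to $(a_j)_j$. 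Since every such equivalent sum is the Dirichlet series of an almost periodic function in the same strip $U$ (the content of \cite[Lemma 3]{SV} recalled in the paper), the corresponding Bochner-Fej\'er value at $\sigma_0$ is exactly $g(\sigma_0)$ for the unique $g\in \mathcal{D}_\Lambda$ with $g\shortstack{$_{{\fontsize{6}{7}\selectfont *}}$\\$\sim$} f$ attached to that coefficient sequence, and every such $g$ is obtained this way. Hence $\operatorname{Img}(F_f^{G_\Lambda^*}(\sigma_0,\cdot))$ coincides with the announced set. Repeating the argument with Corollary \ref{cor1ner} in place of Proposition \ref{propnaturalbasis} (now using $\mathbf{x}\in[0,2\pi)^{\sharp G_\Lambda}$ and $\mathbf{q}_j\in S_T$) yields the same description for $\operatorname{Img}(F_f^{G_\Lambda}(\sigma_0,\cdot))$, and the lemma follows.

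The main subtlety I anticipate lies in the normalization $\mathbf{x}\in[0,2\pi)^{\sharp G_\Lambda}$ after change of basis. The paper's hint, that the proof is similar to that of Corollary \ref{cor1ner}, points to a more direct route in which one transports parameters via $(\mathbf{x},\mathbf{p}_j)\mapsto(T\mathbf{x},T\mathbf{p}_j)$ and invokes the identity $\langle \mathbf{r}_j,\mathbf{x}+\mathbf{p}_j\rangle=\langle \mathbf{s}_j,T\mathbf{x}+T\mathbf{p}_j\rangle$ established in the paragraph preceding Corollary \ref{cor1ner}. The non-trivial step of this alternative would be to reduce the transported vector $T\mathbf{x}$ back into $[0,2\pi)^{\sharp G_\Lambda}$ while keeping the $\mathbf{q}_j$'s inside $S_T$, exactly the kind of reduction carried out at the end of the proof of Proposition \ref{propnaturalbasis}. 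The indirect route I propose sidesteps this by delegating the reduction entirely to those earlier results.
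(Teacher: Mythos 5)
Your proposal is correct, but it takes a genuinely different route from the paper's own proof. The paper argues by a direct change of variables on the parameters: given $w_1=F_f^{G_{\Lambda}^*}(\sigma_0,\mathbf{x}_1,\mathbf{p}_1,\mathbf{p}_2,\ldots)$, it transports $\mathbf{x}_1$ and the $\mathbf{p}_j$ through the change-of-basis matrix $T$ (setting $x_{2,k}=\langle\mathbf{t}_k,\mathbf{x}_1\rangle$ and $q_{j,k}=\langle\mathbf{t}_k,\mathbf{p}_j\rangle$) and verifies the identity $\langle\mathbf{r}_j,\mathbf{x}_1+\mathbf{p}_j\rangle=\langle\mathbf{s}_j,\mathbf{x}_2+\mathbf{q}_j\rangle$ exactly as in the computation preceding Corollary \ref{cor1ner}, concluding $w_1=F_f^{G_{\Lambda}}(\sigma_0,\mathbf{x}_2,\mathbf{q}_1,\mathbf{q}_2,\ldots)$; the reverse inclusion is symmetric. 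You instead identify each image with the basis-free set $\{g(\sigma_0): g\in\mathcal{D}_{\Lambda},\ g \text{ equivalent to } f\}$, reading Proposition \ref{propnaturalbasis} and Corollary \ref{cor1ner} as exact parametrizations of the coefficient sequences in the equivalence class and invoking the fact, recalled before the lemma, that every equivalent Dirichlet series is that of an almost periodic function on the same strip. Both ingredients are available at that point of the paper, so the argument is legitimate, and it has the merit of making the basis-independence conceptually transparent (in effect you prove a version of Proposition \ref{pult}(ii) ahead of time). What it costs is that you inherit, rather than resolve, the normalization question of whether the transported vector can always be reduced into $[0,2\pi)^{\sharp G_{\Lambda}}$ while keeping the $\mathbf{q}_j$ inside $S_T$ --- you flag this candidly, and it is only fair to note that the paper's own direct proof performs the same transport without carrying out that reduction either, so your delegation to Corollary \ref{cor1ner} puts you in no worse a position than the original.
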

\begin{proof}
Let $\sum_{j\geq 1}a_je^{\lambda_js}$ be the Dirichlet series associated with $f(s)\in \mathcal{D}_{\Lambda}$, and  $G_{\Lambda}^*$ and $G_{\Lambda}$ be the natural and an arbitrary basis for $\Lambda$, respectively. 
For each $j\geq 1$ let $\mathbf{r}_j$ and $\mathbf{s}_j$ be the vector of integer components so that  $\lambda_j=<\mathbf{r}_j,\mathbf{g}>$ and $\lambda_j=<\mathbf{s}_j,\mathbf{h}>$, with $\mathbf{g}$ and $\mathbf{h}$ the vectors associated with the basis $G_{\Lambda}^*$ and $G_{\Lambda}^*$, respectively. Finally, for each integer $k\geq 1$, let $\mathbf{t}_k$ be the vector given by $h_k=<\mathbf{t}_k,\mathbf{g}>$.
Take $w_1\in \operatorname{Img}\left(F_f^{G_{\Lambda}^*}(\sigma_0,\mathbf{x},\mathbf{p}_1,\mathbf{p}_2,\ldots)\right)$, then there exists $\mathbf{x}_1\in[0,2\pi)^{\sharp G_{\Lambda}}$ and $\mathbf{p}_j\in 2\pi\mathbb{Z}^{\sharp G_{\Lambda}^*}$, $j=1,2,\ldots$, $\mbox{ such that }w_1=F_f^{G_{\Lambda}^*}(\sigma_0,\mathbf{x}_1,\mathbf{p}_1,\mathbf{p}_2,\ldots)$. Hence
$$w_1=F_f^{G_{\Lambda}^*}(\sigma_0,\mathbf{x}_1,\mathbf{p}_1,\mathbf{p}_2,\ldots)=\sum_{j\geq1}a_j e^{\lambda_j\sigma_0
}e^{<\mathbf{r}_j,\mathbf{x}_1+\mathbf{p}_j>i}=\sum_{j\geq1}a_j e^{\lambda_j\sigma_0
}e^{<\mathbf{s}_j,\mathbf{x}_2+\mathbf{q}_j>i},$$
where $\mathbf{q}_j$ is defined as $q_{1,k}=<\mathbf{t}_k,\mathbf{p}_j>$ for each $k\geq 1$, and $\mathbf{x}_2$ is defined as $x_{2,k}=<\mathbf{t}_k,\mathbf{x}_1>$ for each $k\geq 1$. Therefore, $w_1=F_f^{G_{\Lambda}}(\sigma_0,\mathbf{x}_2,\mathbf{q}_1,\mathbf{q}_2,\ldots)$ and $w_1\in \operatorname{Img}\left(F_f^{G_{\Lambda}}(\sigma_0,\mathbf{x},\mathbf{q}_1,\mathbf{q}_2,\ldots)\right)$, which gives $$\operatorname{Img}\left(F_f^{G_{\Lambda}^*}(\sigma_0,\mathbf{x},\mathbf{p}_1,\mathbf{p}_2,\ldots)\right)\subseteq \operatorname{Img}\left(F_f^{G_{\Lambda}}(\sigma_0,\mathbf{x},\mathbf{q}_1,\mathbf{q}_2,\ldots)\right).$$ An analogous argument shows that
$\operatorname{Img}\left(F_f^{G_{\Lambda}}(\sigma_0,\mathbf{x},\mathbf{q}_1,\mathbf{q}_2,\ldots)\right)$ is included in $ \operatorname{Img}\left(F_f^{G_{\Lambda}^*}(\sigma_0,\mathbf{x},\mathbf{p}_1,\mathbf{p}_2,\ldots)\right)$, which proves the result.
\qed
\end{proof}

Consequently, from now on we will use the notation $\operatorname{Img}\left(F_f(\sigma_0,\mathbf{x},\mathbf{p}_1,\mathbf{p}_2,\ldots)\right)$ for the set of values taken on by the auxiliary function associated with a function $f(s)\in \mathcal{D}_{\Lambda}$. In this respect, without loss of generality, we can use the natural basis for the set of exponents $\Lambda$. Moreover, under the assumption of existence of an integral basis, we will use the notation $\operatorname{Img}\left(F_f(\sigma_0,\mathbf{x})\right)$ for the set above. In fact, in this case $\operatorname{Img}\left(F_f(\sigma_0,\mathbf{x})\right)$ is the same as that of \cite[Definition 5]{SVBohr} and all the results of \cite{SVBohr} concerning integral basis are also valid for our case (see also Remark 7 on the Arxiv version of \cite{SV}). 

\section{Main results}

Given a function $f(s)$, take the notation $$\operatorname{Img}\left(f(\sigma_0+it)\right)=\{s\in\mathbb{C}:\exists t\in\mathbb{R}\mbox{ such that }s=f(\sigma_0+it)\}.$$

We next show the first important result in this paper concerning the connection between our equivalence relation and the set of values in the complex
plane taken on by the auxiliary function (compare with \cite[Proposition 2]{SVBohr} for the case of existence of an integral basis).

\begin{proposition}\label{pult}
Given $\Lambda$ a set of exponents, let $f(s)\in \mathcal{D}_{\Lambda}$ be an almost periodic function in an open vertical strip $U$, and $\sigma_0=\operatorname{Re}s_0$ with $s_0\in U$.
\begin{itemize}
\item[i)] If $f_1\shortstack{$_{{\fontsize{6}{7}\selectfont *}}$\\$\sim$} f$, then $\operatorname{Img}\left(f_1(\sigma_0+it)\right)\subset \overline{\operatorname{Img}\left(f(\sigma_0+it)\right)}$ and $$\overline{\operatorname{Img}\left(f(\sigma_0+it)\right)}= \overline{\operatorname{Img}\left(f_1(\sigma_0+it)\right)}.$$

\item[ii)] $\operatorname{Img}\left(F_f(\sigma_0,\mathbf{x},\mathbf{p}_1,\mathbf{p}_2,\ldots)\right)=\bigcup_{f_k\shortstack{$_{{\fontsize{6}{7}\selectfont *}}$\\$\sim$} f}\operatorname{Img}\left(f_k(\sigma_0+it)\right).$

\item[iii)] $\operatorname{Img}\left(F_f(\sigma_0,\mathbf{x},\mathbf{p}_1,\mathbf{p}_2,\ldots)\right)$ is a closed set.\\

\item[iv)] $\operatorname{Img}\left(F_f(\sigma_0,\mathbf{x},\mathbf{p}_1,\mathbf{p}_2,\ldots)\right)=\overline{\operatorname{Img}\left(f_1(\sigma_0+it)\right)}$ for any $f_1\sim f$.
\end{itemize}
\end{proposition}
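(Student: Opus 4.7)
The plan is to prove the four items in the order (i), (ii), (iv), (iii): (iv) depends on (i) and (ii), and (iii) is then immediate because closures are closed.

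For (i), the strategy is a Kronecker-type density argument applied to the finite support of a Bochner--Fej\'er approximant. Fix $\epsilon > 0$, choose a Bochner--Fej\'er polynomial $P_k^{f_1}$ that approximates $f_1$ to within $\epsilon$ uniformly on every reduced strip, with finite support $J \subseteq \{1,\ldots,N\}$, and invoke Proposition~\ref{prop1prima} to get $\mathbf{x}_N$ such that $b_j = a_j e^{i\langle \mathbf{r}_j, \mathbf{x}_N\rangle}$ for all $j\leq N$. This lets me rewrite
\[
P_k^{f_1}(\sigma_0+it) = \sum_{j\in J} p_{j,k}\,a_j\,e^{\lambda_j\sigma_0}\,e^{i\langle \mathbf{r}_j, \mathbf{x}_N + t\mathbf{g}\rangle},
\]
and the goal becomes finding a shift $s$ with $e^{i\lambda_j s}\approx e^{i\langle \mathbf{r}_j, \mathbf{x}_N\rangle}$ for every $j\in J$. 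The target respects every integer relation of the $\lambda_j$'s, since $\sum_j c_j\lambda_j = 0$ forces $\sum_j c_j\mathbf{r}_j = \mathbf{0}$ by $\mathbb{Q}$-linear independence of the natural basis, and hence $\sum_j c_j\langle \mathbf{r}_j,\mathbf{x}_N\rangle = 0$. Kronecker's theorem (in its subgroup-orbit form) then yields the desired $s$, and three triangle-inequality estimates give $f_1(\sigma_0+it)\in\overline{\operatorname{Img}(f(\sigma_0+it))}$; the equality of closures follows by symmetry of $\shortstack{$_{{\fontsize{6}{7}\selectfont *}}$\\$\sim$}$.

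For (ii), the inclusion $\supseteq$ comes directly from Proposition~\ref{lequiv20}: if $f_k \shortstack{$_{{\fontsize{6}{7}\selectfont *}}$\\$\sim$} f$, then for every $t$ one has $f_k(\sigma_0+it) = F_f(\sigma_0,\mathbf{y}+t\mathbf{g},\mathbf{p}_1,\mathbf{p}_2,\ldots)$ for suitable $\mathbf{y}$ and $\mathbf{p}_j$'s. Conversely, given $w = F_f(\sigma_0,\mathbf{x}_0,\mathbf{p}_1,\ldots)$, I would define $b_j := a_j\,e^{i\langle \mathbf{r}_j, \mathbf{x}_0+\mathbf{p}_j\rangle}$; the series $\sum_j b_j e^{\lambda_j(\sigma+it)}$ agrees term-by-term with $F_f(\sigma,\mathbf{x}_0+t\mathbf{g},\mathbf{p}_1,\ldots)$ on $U$, so the converse of Proposition~\ref{lequiv20}, combined with Lemma~3 of~\cite{SV} to place the resulting function in $AP(U,\mathbb{C})$, provides an $f_k \shortstack{$_{{\fontsize{6}{7}\selectfont *}}$\\$\sim$} f$ with $f_k(\sigma_0) = w$.

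For (iv), the inclusion $\operatorname{Img}(F_f)\subseteq\overline{\operatorname{Img}(f_1)}$ is a consequence of (i) and (ii) together with $\overline{\operatorname{Img}(f_1)} = \overline{\operatorname{Img}(f)}$. For the reverse, take $w = \lim_n f(\sigma_0+it_n)$; a diagonal extraction in the compact group $\mathbb{T}^{\mathbb{N}}$ yields a subsequence along which $e^{i\lambda_j t_n}\to \omega_j\in\mathbb{T}$ for every $j\geq 1$. Because $\prod_j (e^{i\lambda_j t_n})^{c_j} = 1$ for every integer relation $\sum_j c_j\lambda_j = 0$, the assignment $\lambda_j\mapsto \omega_j$ extends to a character $\chi$ of the $\mathbb{Z}$-module $L$ generated by $\Lambda$; since $\mathbb{T}$ is injective as an abelian group, $\chi$ extends further to $V=\langle\Lambda\rangle_{\mathbb{Q}}$, from which, for each $n$, one reads off a $\mathbb{Q}$-linear $\psi_n:V\to\mathbb{R}$ with $e^{i\psi_n(\lambda_j)} = \chi(\lambda_j)$ for $j\leq n$, as demanded by Definition~\ref{DefEquiv00}. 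Hence $f_k := \sum_j a_j\chi(\lambda_j)e^{\lambda_j s}$ satisfies $f_k\shortstack{$_{{\fontsize{6}{7}\selectfont *}}$\\$\sim$} f$, and a double-limit interchange through the Bochner--Fej\'er polynomials (whose factors $p_{j,k}$ depend only on the spectrum $\Lambda$, so the same family approximates $f$ and $f_k$) yields $f_k(\sigma_0) = w$, giving the required containment $w\in\operatorname{Img}(f_k)\subseteq\operatorname{Img}(F_f)$. Part (iii) is then an immediate corollary, since $\operatorname{Img}(F_f) = \overline{\operatorname{Img}(f_1)}$ is by definition closed. The principal obstacle is this extraction/lifting step in (iv)---verifying both that $(\omega_j)$ genuinely descends to a character of $L$, and that this character lifts to the family $\{\psi_n\}$ of $\mathbb{Q}$-linear functionals required by Definition~\ref{DefEquiv00}.
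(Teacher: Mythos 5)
Your overall architecture is sound and reaches the right conclusions, but it is genuinely different from the paper's in two places, and it has one real gap. On the differences: for (i) the paper simply cites the fact (from \cite{SV}) that every function equivalent to $f$ is a limit, uniformly on reduced strips, of vertical translates $f(s+i\tau)$, and then reads off the inclusion of images in one line; you instead re-prove this from scratch via Bochner--Fej\'er truncation plus the Kronecker--Weyl theorem (checking that the target phases $\langle\mathbf{r}_j,\mathbf{x}_N\rangle$ satisfy every integer relation among the $\lambda_j$'s, hence lie in the orbit closure). That computation is correct and buys self-containedness at the cost of length. For the last two items you invert the paper's order: the paper proves (iii) first, by a normal-families/compactness argument inside the equivalence class (again quoting \cite{SV}), and then obtains (iv) by sandwiching $\operatorname{Img}(f)\subset\operatorname{Img}(F_f)\subset\overline{\operatorname{Img}(f)}$ and taking closures; you prove the hard inclusion $\overline{\operatorname{Img}(f)}\subseteq\operatorname{Img}(F_f)$ of (iv) directly and get (iii) for free. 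Both orderings work, and your (ii) coincides with the paper's.

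The gap is the step you yourself flag as the principal obstacle in (iv), and the tool you propose for it does not work. Divisibility (injectivity) of $\mathbb{T}$ lets you extend the character $\chi$ from the $\mathbb{Z}$-module $L$ to the $\mathbb{Q}$-vector space $V$ as a map into $\mathbb{T}$, but that is not what Definition \ref{DefEquiv} asks for: you need, for each $n$, a $\mathbb{Q}$-linear map $\psi_n:V\to\mathbb{R}$ with $e^{i\psi_n(\lambda_j)}=\omega_j$ for $j\le n$, i.e. a lift through the exponential $\mathbb{R}\to\mathbb{T}$. Not every character of a $\mathbb{Q}$-vector space lifts: $\operatorname{Hom}(\mathbb{Q},\mathbb{T})$ is the (uncountable-dimensional) adele class group while $\operatorname{Hom}(\mathbb{Q},\mathbb{R})\cong\mathbb{R}$, so injectivity of $\mathbb{T}$ proves nothing here. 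What saves you is that for each fixed $n$ only finitely many conditions are imposed: after clearing denominators, the map $\mathbf{x}\mapsto\bigl(e^{i\langle\mathbf{r}_j,\mathbf{x}\rangle}\bigr)_{j\le n}$ is induced by an integer matrix on a finite-dimensional torus, so its image is a \emph{closed} subtorus of $\mathbb{T}^n$; it contains the orbit $\{(e^{i\lambda_j t})_{j\le n}:t\in\mathbb{R}\}$, hence its closure, hence the limit point $(\omega_j)_{j\le n}$, and any preimage $\mathbf{x}_n$ yields the required $\psi_n$. This is the same Kronecker-type argument you already used in (i), applied in reverse; you should make it explicit rather than appeal to injectivity. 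Be aware, finally, that this finite-truncation subtlety is exactly the delicate point on which the paper itself relies through the cited results of \cite{SV}, and it is the reason the authors later restricted the statement to sets of exponents admitting an integral basis.
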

\begin{proof}

i) Note that \cite[Theorem 4]{SV} (see also the Arxiv version) 
shows that the functions in the same equivalence class are obtained as limit points of $\mathcal{T}_f=\{f_{\tau}(s):=f(s+i\tau):\tau\in\mathbb{R}\}$, that is, any function $f_1\shortstack{$_{{\fontsize{6}{7}\selectfont *}}$\\$\sim$} f$ is the limit (in the sense of the uniform convergence on every reduced strip of $U$) of a sequence $\{f_{\tau_n}(s)\}$ with $f_{\tau_n}(s):=f(s+i\tau_n)$.
    Take $w_1\in \operatorname{Img}\left(f_1(\sigma_0+it)\right)$, then there exists $t_1\in\mathbb{R}$ such that $w_1=f_1(\sigma_0+it_1)$. Now, given $\varepsilon>0$ there exists $\tau>0$ such that $|f_1(\sigma_0+it_1)-f_{\tau}(\sigma_0+it_1)|<\varepsilon$, which means that
$$ |w_1-f(\sigma_0+i(t_1+\tau))|<\varepsilon.$$
Now it is immediate that $w_1\in \overline{\operatorname{Img}\left(f(\sigma_0+it)\right)}$ and consequently $$\operatorname{Img}\left(f_1(\sigma_0+it)\right)\subset \overline{\operatorname{Img}\left(f(\sigma_0+it)\right)}.$$
Analogously, by symmetry we have $\operatorname{Img}\left(f(\sigma_0+it)\right)\subset \overline{\operatorname{Img}\left(f_1(\sigma_0+it)\right)}$, which implies that     $$\overline{\operatorname{Img}\left(f(\sigma_0+it)\right)}= \overline{\operatorname{Img}\left(f_1(\sigma_0+it)\right)}.$$

ii)
Take $w_0\in \bigcup_{f_k\shortstack{$_{{\fontsize{6}{7}\selectfont *}}$\\$\sim$} f}\operatorname{Img}\left(f_k(\sigma_0+it)\right)$, then $w_0\in \operatorname{Img}\left(f_k(\sigma_0+it)\right)$ for some $f_k\shortstack{$_{{\fontsize{6}{7}\selectfont *}}$\\$\sim$} f$, which means that there exists $t_0\in\mathbb{R}$ such that $$w_0=f_k(\sigma_0+it_0).$$ Note that (see also \cite[Remark 1]{SVBohr}) Proposition \ref{lequiv20} assures the existence of a vector $\mathbf{y}_0\in \mathbb{R}^{\sharp G_\Lambda^*}$ and $\mathbf{p}_j\in 2\pi\mathbb{Z}^{\sharp G_{\Lambda}^*}$, $j=1,2,\ldots$, such that
$w_0=F_{f}(\sigma,\mathbf{y}_0+t_0\mathbf{g},\mathbf{p}_1,\mathbf{p}_2,\ldots)$
Hence $w_0=F_{f}(\sigma_0,\mathbf{x}_0,\mathbf{p}_1,\mathbf{p}_2,\ldots)$, with $\mathbf{x}_0=\mathbf{y}_0+t_0\mathbf{g}\in[0,2\pi)^{\sharp G_{\Lambda}^*}$, which means that $w_0\in \operatorname{Img}\left(F_f(\sigma_0,\mathbf{x},\mathbf{p}_1,\mathbf{p}_2,\ldots)\right)$. Conversely, if $w_0\in \operatorname{Img}\left(F_f(\sigma_0,\mathbf{x},\mathbf{p}_1,\mathbf{p}_2,\ldots)\right)$, then
$w_0=F_{f}(\sigma_0,\mathbf{y}_0,\mathbf{p}_1,\mathbf{p}_2,\ldots)$ for some $\mathbf{y}_0\in[0,2\pi)^{\sharp G_{\Lambda}}$ and $\mathbf{p}_j\in 2\pi\mathbb{Z}^{\sharp G_{\Lambda}^*}$. Take $t_0\in\mathbb{R}$. Since $\mathbf{y}_0=\mathbf{x}_0+t_0\mathbf{g}$, with $\mathbf{x}_0:=\mathbf{y}_0-t_0\mathbf{g}$, then
$$w_0=F_{f}(\sigma_0,\mathbf{x}_0+t_0\mathbf{g},\mathbf{p}_1,\mathbf{p}_2,\ldots)=\sum_{j\geq1}a_j e^{\lambda_j\sigma_0
}e^{<\mathbf{r}_j,\mathbf{x}_0+t_0\mathbf{g}+\mathbf{p}_j>i}=$$$$\sum_{j\geq1}a_j e^{\lambda_j(\sigma_0+it_0)
}e^{<\mathbf{r}_j,\mathbf{x}_0+\mathbf{p}_j>i}.$$
Hence $\sum_{j\geq1}a_je^{<\mathbf{r}_j,\mathbf{x}_0+\mathbf{p}_j>i} e^{\lambda_js}$ is the associated Dirichlet series of an almost periodic function $h(s)\in AP(U,\mathbb{C})$ such that $h\shortstack{$_{{\fontsize{6}{7}\selectfont *}}$\\$\sim$} f$ (see the Arxiv version of \cite[Lemma 3]{SV}) and hence we have that $w_0=h(\sigma_0+it_0)$ (see also \cite[Remark 1]{SVBohr}), which shows that $w_0\in \bigcup_{f_k\shortstack{$_{{\fontsize{6}{7}\selectfont *}}$\\$\sim$} f}\operatorname{Img}\left(f_k(\sigma_0+it)\right).$


iii) Let $w_1,w_2,\ldots,w_j,\ldots$ be a sequence of points in $\operatorname{Img}\left(F_f(\sigma_0,\mathbf{x},\mathbf{p}_1,\mathbf{p}_2,\ldots)\right)$
tending to $w_0$. We next prove that  $w_0\in \operatorname{Img}\left(F_f(\sigma_0,\mathbf{x},\mathbf{p}_1,\mathbf{p}_2,\ldots)\right)$. Indeed, for each
$w_j\in \operatorname{Img}\left(F_f(\sigma_0,\mathbf{x},\mathbf{p}_1,\mathbf{p}_2,\ldots)\right)$, by ii), there exists $f_j\shortstack{$_{{\fontsize{6}{7}\selectfont *}}$\\$\sim$} f$ such that $w_j\in \operatorname{Img}\left(f_j(\sigma_0+it)\right)$. Now, since that $\{f_j(\sigma_0+it)\}$ is a sequence in the same equivalence class, \cite[Proposition 3]{SV} (see also the Arxiv version) assures the existence of a subsequence $\{f_{j_k}\}$ which converges to a certain function $h\shortstack{$_{{\fontsize{6}{7}\selectfont *}}$\\$\sim$} f$. Consequently, $\{w_{j_k}\}$ tends to $w_0\in \operatorname{Img}\left(h(\sigma_0+it)\right)$. Finally, again by ii) we conclude that $w_0\in \operatorname{Img}\left(F_f(\sigma_0,\mathbf{x},\mathbf{p}_1,\mathbf{p}_2,\ldots)\right)$.

iv) Let $\mathbf{g}$ be the vector associated with the natural basis $G_{\Lambda}^*$. Since the Fourier series of $f_{\sigma_0}(t):=f(\sigma_0+it)$ can be obtained as $F_f(\sigma_0,t\mathbf{g},\mathbf{0},\mathbf{0},\ldots)$, with $t\in\mathbb{R}$, it is clear that $\operatorname{Img}\left(f(\sigma_0+it)\right)\subset \operatorname{Img}\left(F_f(\sigma_0,\mathbf{x},\mathbf{p}_1,\mathbf{p}_2,\ldots)\right)$. On the other hand,  we deduce from i) and ii)  that
    $$\operatorname{Img}\left(f(\sigma_0+it)\right)\subset \operatorname{Img}\left(F_f(\sigma_0,\mathbf{x},\mathbf{p}_1,\mathbf{p}_2,\ldots)\right)=
    \bigcup_{f_k\shortstack{$_{{\fontsize{6}{7}\selectfont *}}$\\$\sim$} f}\operatorname{Img}\left(f_k(\sigma_0+it)\right)\subset$$$$ \subset\overline{\operatorname{Img}\left(f(\sigma_0+it)\right)}.$$
    Finally, by taking the closure and property iii),
    we conclude that
    $$\operatorname{Img}\left(F_f(\sigma_0,\mathbf{x},\mathbf{p}_1,\mathbf{p}_2,\ldots)\right)=\overline{\operatorname{Img}\left(f(\sigma_0+it)\right)}.$$
Now, the result follows from property i).
\qed
\end{proof}

At this point we will demonstrate a result like Bohr's equivalence theorem \cite[Section 8.11]{Apostol}. Given $\Lambda$ an arbitrary set of exponents, let $f_1,f_2\in \mathcal{D}_{\Lambda}$ be two equivalent almost periodic functions. We next show that, in any open half-plane or open vertical strip included in their region of almost periodicity, the functions $f_1$ and $f_2$ take the same set of values. In this sense, this result improves that of \cite[Theorem 1]{SVBohr} which was proved uniquely for almost periodic functions associated with sets of exponents which have an integral basis.

\begin{theorem}\label{beqg}
Given $\Lambda$ a set of exponents, let $f_1,f_2\in \mathcal{D}_{\Lambda}$ be two equivalent almost periodic functions in a vertical strip $\{\sigma+it\in\mathbb{C}:\alpha<\sigma<\beta\}$. Consider $E$ an open set of real numbers included in $(\alpha,\beta)$.
Thus $$\bigcup_{\sigma\in E}\operatorname{Img}\left(f_1(\sigma+it)\right)=\bigcup_{\sigma\in E}\operatorname{Img}\left(f_2(\sigma+it)\right).$$
That is, the functions $f_1$ and $f_2$ take the same set of values on the region $\{s=\sigma+it\in\mathbb{C}:\sigma\in E\}$.
\end{theorem}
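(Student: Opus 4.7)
The plan is to exploit Proposition~\ref{pult}\,i), which already gives $\operatorname{Img}(f_1(\sigma_0+it))\subset \overline{\operatorname{Img}(f_2(\sigma_0+it))}$ for each $\sigma_0\in E$, and then to upgrade this closure containment into an honest containment at the price of allowing the real part to shift slightly within $E$. By the symmetry of the statement, it suffices to prove one of the two inclusions, say
$$\bigcup_{\sigma\in E}\operatorname{Img}\left(f_1(\sigma+it)\right)\subset \bigcup_{\sigma\in E}\operatorname{Img}\left(f_2(\sigma+it)\right),$$
and the openness of $E$ is exactly what makes the shift possible.

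First I would fix $w_0=f_1(\sigma_0+it_0)$ with $\sigma_0\in E$ and, via the preceding inclusion, produce a sequence $\{t_n\}\subset \mathbb{R}$ with $f_2(\sigma_0+it_n)\to w_0$. Bochner's compactness principle for almost periodic functions, together with \cite[Theorem~4]{SV}, guarantees that after passing to a subsequence the translates $f_2(\,\cdot\,+it_n)$ converge uniformly on every reduced strip of $U$ to an almost periodic function $h\in \mathcal{D}_{\Lambda}$ which is equivalent to $f_2$. Evaluating the uniform convergence at $s=\sigma_0$ forces $h(\sigma_0)=w_0$, and since $h$ is the locally uniform limit of the holomorphic functions $f_2(\,\cdot\,+it_n)$, it is itself holomorphic on $U$.

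Next I would split into two cases. If $h$ is constant, then $h\equiv w_0$; since the Dirichlet series of a constant function reduces to the single exponent $\lambda=0$ and the equivalence of Definition~\ref{DefEquiv00} fixes the coefficient of $\lambda=0$ (any $\mathbb{Q}$-linear $\psi$ satisfies $\psi(0)=0$), this forces $f_2\equiv w_0$ and the conclusion is immediate. If $h$ is non-constant, using the openness of $E$ I would pick $r>0$ so small that $\overline{D(\sigma_0,r)}\subset E+i\mathbb{R}$ and $h(s)\neq w_0$ on $\partial D(\sigma_0,r)$, which is possible because the zeros of the non-identically zero holomorphic function $h-w_0$ are isolated. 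Combining the uniform convergence $f_2(\,\cdot\,+it_n)\to h$ on $\overline{D(\sigma_0,r)}$ with Hurwitz's theorem (or Rouch\'e's theorem) then produces, for every sufficiently large $n$, a point $s_n\in D(\sigma_0,r)$ with $f_2(s_n+it_n)=w_0$; since $\operatorname{Re}(s_n+it_n)\in(\sigma_0-r,\sigma_0+r)\subset E$, this places $w_0$ in $\bigcup_{\sigma\in E}\operatorname{Img}(f_2(\sigma+it))$, as required.

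The main obstacle is precisely this closure-to-image step: Proposition~\ref{pult} by itself only yields equalities up to closure, and bridging from $\overline{\operatorname{Img}(f_2(\sigma_0+it))}$ to $\operatorname{Img}(f_2(\sigma+it))$ for a nearby $\sigma\in E$ genuinely requires both the openness of $E$ and the holomorphic structure of almost periodic functions, captured via Hurwitz/Rouch\'e and the isolatedness of zeros.
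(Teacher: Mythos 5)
Your proposal is correct and follows essentially the same route as the paper's own proof: reduce to one inclusion, use Proposition~\ref{pult}\,i) to place $w_0$ in $\overline{\operatorname{Img}(f_2(\sigma_0+it))}$, extract a convergent subsequence of vertical translates of $f_2$ via the compactness result of \cite{SV}, and apply Hurwitz's theorem together with the openness of $E$ to recover an actual value rather than a limit value. Your explicit treatment of the case where the limit function $h$ is constant is a slightly more careful version of the paper's upfront ``without loss of generality $f_1,f_2$ are not constant'' reduction, but the argument is otherwise the same.
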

\begin{proof} 
Without loss of generality, suppose that $f_1$ and $f_2$ are not constant functions (otherwise it is trivial). Take $w_0\in \bigcup_{\sigma\in E}\operatorname{Img}\left(f_1(\sigma+it)\right)$, then $w_0\in \operatorname{Img}\left(f_1(\sigma_0+it)\right)$ for some $\sigma_0\in E$ and hence $w_0=f_1(\sigma_0+it_0)$ for some $t_0\in\mathbb{R}$. Furthermore, by Proposition \ref{pult}, we get
$w_0\in\overline{\operatorname{Img}\left(f_1(\sigma_0+it)\right)}=\overline{\operatorname{Img}\left(f_2(\sigma_0+it)\right)}$,
which implies that there exists a sequence $\{t_n\}$ of real numbers such that
$$w_0=\lim_{n\to\infty}f_2(\sigma_0+it_n).$$ Take $h_n(s):=f_2(s+it_n)$, $n\in\mathbb{N}$.
By \cite[Proposition 4]{SV} (see also the Arxiv version), there exists a subsequence $\{h_{n_k}\}_k\subset \{h_n\}_n$ which converges uniformly on compact subsets to a function $h(s)$, with $h\shortstack{$_{{\fontsize{6}{7}\selectfont *}}$\\$\sim$} f_2$. Observe that $$\lim_{k\to\infty}h_{n_k}(\sigma_0)=h(\sigma_0)=w_0.$$ Therefore, by Hurwitz's theorem \cite[Section 5.1.3]{Ash2}, there is a positive integer $k_0$ 
such that for $k>k_0$ the functions $h^*_{n_k}(s):=h_{n_k}(s)-w_0$ have one zero in $D(\sigma_0,\varepsilon)$ for any $\varepsilon>0$ sufficiently small. This means that for $k>k_0$ the functions $h_{n_k}(s)=f_2(s+it_{n_k})$, and hence the function $f_2(s)$, take the value $w_0$ on the region $\{s=\sigma+it:\sigma_0-\varepsilon<\sigma<\sigma_0+\varepsilon\}$ for any $\varepsilon>0$ sufficiently small (recall that $E$ is an open set). 
Consequently, $w_0\in \bigcup_{\sigma\in E}\operatorname{Img}\left(f_2(\sigma+it)\right)$.
We analogously prove that $\bigcup_{\sigma\in E}\operatorname{Img}\left(f_2(\sigma+it)\right)\subset \bigcup_{\sigma\in E}\operatorname{Img}\left(f_1(\sigma+it)\right)$.
\qed
\end{proof}

Finally, we note that \cite[Example 2]{SVBohr} also shows that, fixed an open set $E$ in $(\alpha,\beta)$, the converse of Theorem \ref{beqg} is not true.

\bibliographystyle{amsplain}

\begin{thebibliography}{9}
\bibitem{Apostol} T.M. Apostol, Modular functions and Dirichlet series in number theory, Springer-Verlag, New York, 1990.

\bibitem{Ash2} \textsc{Ash, R.B.; Novinger, W.P.}: \textit{Complex Variables}, New York: Academic Press,
2004. 


\bibitem{Besi} A.S. Besicovitch, Almost periodic functions, Dover, New York, 1954.


\bibitem{BohrDirichlet} H. Bohr, Z\"{u}r Theorie der allgemeinen Dirichletschen Reihen, Math. Ann., \textbf{79} (1919), 136-156.


\bibitem{Bohr} H. Bohr, Almost periodic functions, Chelsea, New York, 1951.

\bibitem{Bohr2} H. Bohr, Contribution to the theory of almost periodic functions, Det Kgl. danske Videnskabernes Selskab. Matematisk-fisiske meddelelser. Bd. XX. Nr. 18, Copenhague, 1943.


\bibitem{Corduneanu1} C. Corduneanu, Almost Periodic Functions, Interscience publishers, New York, London, Sydney, Toronto, 1968.





\bibitem{Jessen} B. Jessen, Some aspects of the theory of almost periodic functions, in Proc. Internat. Congress Mathematicians Amsterdam, 1954, Vol. 1, North-Holland, 1954, pp. 304--351.




\bibitem{Rigue} M. Riguetti, On Bohr's equivalence theorem. J. Math. Anal. Appl., \textbf{445} (1) (2017), 650-654. corrigendum ibid. 449 (2017), 939-940. 


\bibitem{SV} J.M. Sepulcre, T. Vidal, Almost Periodic Functions in terms of Bohr's Equivalence Relation, Ramanujan J., DOI: 10.1007/s11139-017-9950-1, 2017. Corrigendum sent to the journal. See also {\em arXiv: 1801.08035 [math.CV]}. 


\bibitem{SVBohr} J.M. Sepulcre, T. Vidal, A generalization of Bohr's equivalence theorem, {\em arXiv:1711.04112v2  [math.CV]}, (2018).



\bibitem{Spira} R. Spira, Sets of values of general Dirichlet series, Duke Math. J. \textbf{35} (1) (1968), 79-82.



\end{thebibliography}

\end{document}